\newtheorem{teor}{Theorem}
\newtheorem{cor}{Corollary}
\newtheorem{prop}{Proposition}
\newtheorem{lem}{Lemma}
\newtheorem{rem}{Remark}
\author{Jos\'{e} Mar\'{i}a Grau}
\address{Departamento de Matemáticas, Universidad de Oviedo\\ Avda. Calvo Sotelo, s/n, 33007 Oviedo, Spain}
\email{grau@uniovi.es}
\author{Antonio M. Oller-Marc\'{e}n}
\address{Centro Universitario de la Defensa\\ Ctra. de Huesca, s/n, 50090 Zaragoza, Spain}
\email{oller@unizar.es}
\title{Fast computation of the number of solutions to $x_1^2+\cdots+x_k^2 \equiv \lambda \pmod{n}$}
\begin{document}
\maketitle
\begin{abstract}
In this paper we study the multiplicative function $\rho_{k,\lambda}(n)$ that counts the number of incongruent solutions of the equation $x_1^2+\cdots+x_k^2 \equiv \lambda\pmod{n}$. In particular we give closed explicit formulas for $\rho_{k,\lambda}(p^s)$ with a arithmetic complexity of constant order.
\end{abstract}

\section{Introduction}

Let $k$, $\lambda$ and $n$ be positive integers and let $\rho_{k,\lambda}(n)$ denote the number of incongruent solutions of the equation
$$x_1^2+x_2^2+\cdots +x_k^2\equiv \lambda \pmod {n}.$$
In other terms:
$$\rho_{k,\lambda}(n):=\textrm{card}\ \{(x_i,\dots,x_k)\in(\mathbb{Z}/n\mathbb{Z})^k : x_1^2+\cdots +x_k^2\equiv\lambda\pmod{n}\}$$

Since the function $\rho_{k,\lambda}$ is multiplicative, it is enough to consider the case when $n=p^s$ is a prime power. Moreover, it is also clear that we can introduce the restriction $0\leq \lambda <n$.

The computation of $\rho_{k,\lambda}(n)$ by mere exhaustive search is obviously inefficient since its computational complexity has order $\Theta(n^k)$. Thus, the interest to find closed formulas involving a number of  operations
which is as small as possible.

Identities for $\rho_{k,\lambda}(n)$ can be derived using Gauss and Jacobi sums. In fact, we have (see \cite{Koro}) a very compact expression like:

\begin{equation}\label{gaus}
   \rho_{k,\lambda}(n)={1\over n} \sum_{ a=1}^ne^{-{2\pi i\frac{a\lambda}{n}}}\left (\sum_{x=1}^ne^{2\pi i{ a x^2\over n}}\right )^k.
\end{equation}

This expression has theoretical value and it could even be practically applied for small values of $n$. Nevertheless, it is not useful for moderately big values of $n$, even in the particularly simple case $\lambda=0$. This is because the arithmetic complexity of that formula is $\Theta(n^2)$.

Another compact expression can be found in \cite{toth}. Namely,

 \begin{equation}\label{dos}
    \rho_{k,\lambda}(n)=n^{k-1} \sum_{d \mid n} \frac{1}{d^k} \sum_{\substack{l=1\\ \gcd(l,r)=1}}^d e^{\frac{-2\pi i l \lambda}{d}}   S(l,d)^k ,
\end{equation}

where $S(l,r)$ is the quadratic Gauss sum defined by
$$S(l,r):=\sum_{\substack{j=1\\ \gcd(l,r)=1}}^r\exp(2 \pi i l j^2/r).$$
This formula is also inefficient, even in the prime-power case. In fact, if $n=p^s$ the arithmetic complexity is $\Theta(s^3)$.

Some efficient explicit formulas are known for some particular cases. For instance, V.H. Lebesgue \cite{jor2} gave in 1837 a closed formula for $\rho_{k,\lambda}(p)$. In \cite[p. 46]{cero} a formula for $\rho_{k,0}(p^s)$ is given and the case $\gcd(\lambda,p)=1$ was completely solved in \cite{concat} giving. Finally, in \cite{toth} and \cite{Cal} we can find closed formulas for some particular cases of $k$ and $\lambda$.

Nevertheless, up to date, no general formula with constant (independent of $k$, $\lambda$ and $s$) complexity for the computation of $\rho_{k,\lambda}(p^s)$ has been given. Thus, with the results that are known today it is not possible to compute (in a reasonable time) the value of $\rho_{10,5^{100000}}(5^{1000000})$, for instance.

In this work, we present explicit general formulas for $\rho_{k,\lambda}(p^s)$ with arithmetic complexity of constant order, $O(1)$. We use elementary techniques that do not involve Gauss or Jacobi sums.

\section{Known basic cases}

The formulas for $\rho_{k,\lambda}(p^s)$ that we are going to present ultimately rely on the values of $\rho_{k,\lambda}(p)$ if $p$ is an odd prime and on the values of $\rho_{k,\lambda}(2^s)$ with $1\leq s\leq 3$ if $p=2$.

As we already pointed out, when $p$ is an odd prime the values of $\rho_{k,\lambda}(p)$ were already studied by V.H. Lebesgue in 1837. In particular he proved the following result \cite[Chapter X]{jor2}, where $\left(\frac{\lambda}{p}\right)$ denotes the Legendre symbol defined by
$$\left(\frac{\lambda}{p}\right)=\begin{cases} 1, & \textrm{if $\lambda$ is a quadratic residue modulo $p$};\\-1, & \textrm{if $\lambda$ is a not a quadratic residue modulo $p$};\\0, & \textrm{if $p\mid\lambda$}.\end{cases}$$

\begin{prop}
\label{LEB}
Let $p$ be an odd prime and let $k,\lambda$ be positive integers with $0\leq\lambda<p$. Put $t=(-1)^{(p-1)(k-1)/4}p^{(k-1)/2}$ and $l=(-1)^{k(p-1)/4}p^{(k-2)/2}$. Then,
$$\rho_{k,\lambda}(p)=\begin{cases} p^{k-1}+\left(\frac{\lambda}{p}\right)t, & \textrm{If $k$ is odd};\\p^{k-1}-l+\left(1-\left|\left(\frac{\lambda}{p}\right)\right|\right)pl, & \textrm{If $k$ is even}.
\end{cases}$$
\end{prop}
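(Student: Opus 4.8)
The plan is to count solutions of $x_1^2+\cdots+x_k^2\equiv\lambda\pmod p$ by induction on $k$, using the elementary fact about the number of representations as a sum of two squares. The base case is $k=1$: the number of $x$ with $x^2\equiv\lambda$ is $1+\left(\frac{\lambda}{p}\right)$, which agrees with the claimed formula for odd $k=1$ (here $t=1$). The key auxiliary computation is the two-variable count $N_2(\mu):=\#\{(x,y):x^2+y^2\equiv\mu\pmod p\}$, which by a standard argument (writing $N_2(\mu)=\sum_{a+b\equiv\mu}(1+(\tfrac{a}{p}))(1+(\tfrac{b}{p}))$ and evaluating the Jacobsthal-type sum $\sum_b(\tfrac{b}{p})(\tfrac{\mu-b}{p})$) equals $p-(-1)^{(p-1)/2}$ when $p\nmid\mu$ and $p+(p-1)(-1)^{(p-1)/2}$ when $p\mid\mu$. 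I would record this as a lemma since it already exhibits the dichotomy between the ``$\lambda$ is a unit'' and ``$p\mid\lambda$'' cases that propagates into the even-$k$ formula.

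Next I would set up the recursion $\rho_{k,\lambda}(p)=\sum_{\mu=0}^{p-1}N_2(\mu)\,\rho_{k-2,\lambda-\mu}(p)$, which increments $k$ by $2$ and therefore preserves parity; combined with the two base cases $k=1$ and $k=2$ this handles all $k$. Because $N_2(\mu)$ takes only two values depending on whether $p\mid\mu$, the recursion collapses to $\rho_{k,\lambda}(p)=\bigl(p-(-1)^{(p-1)/2}\bigr)\sum_{\mu}\rho_{k-2,\lambda-\mu}(p)+p(-1)^{(p-1)/2}\rho_{k-2,\lambda}(p)$, and since $\sum_{\mu}\rho_{k-2,\lambda-\mu}(p)=p^{k-2}$ (every tuple contributes for exactly one value of $\mu$), this becomes a clean closed recursion $\rho_{k,\lambda}(p)=\bigl(p-(-1)^{(p-1)/2}\bigr)p^{k-2}+p(-1)^{(p-1)/2}\rho_{k-2,\lambda}(p)$. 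One then checks that the proposed formula satisfies this recursion, splitting into the odd and even cases and into the subcases $p\mid\lambda$ versus $p\nmid\lambda$; the terms $t$, $l$, and the sign powers $(-1)^{(p-1)(k-1)/4}$, $(-1)^{k(p-1)/4}$ are exactly what is needed to make the induction go through.

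The main obstacle will be bookkeeping of the sign factors: the exponents in $t$ and $l$ involve $(p-1)/4$ and $(k-1)/4$, so one must carefully track them modulo the relevant period as $k\mapsto k+2$, distinguishing $p\equiv1$ from $p\equiv3\pmod 4$ and $k$ even from $k$ odd, and verifying that $(-1)^{(p-1)/2}$ appearing in the recursion combines correctly with $p^{(k-3)/2}$ or $p^{(k-4)/2}$ to reproduce $t$ or $l$ at level $k$. Once the sign arithmetic is organized into a short case table, the verification is a routine substitution. An alternative, if one prefers to avoid induction, is to diagonalize the quadratic form over $\mathbb{Z}/p\mathbb{Z}$ and reduce to counting points on $\sum c_i x_i^2=\lambda$ via Gauss sums, but since the stated goal of the paper is to avoid Gauss sums, the inductive route above is the natural one to present here.
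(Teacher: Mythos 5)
Your proposal is correct, but it cannot be compared to ``the paper's proof'' because the paper offers none: Proposition \ref{LEB} is quoted as Lebesgue's 1837 result with a pointer to Dickson's \emph{History}, and the authors deliberately treat it as a black box that feeds their recursions. Your inductive argument is a sound, self-contained elementary derivation in the spirit the paper claims to favour (no Gauss or Jacobi sums). All the key steps check out: the base cases $k=1$ (where $t=1$) and $k=2$ (where $l=(-1)^{(p-1)/2}$) match the stated formula; the evaluation $N_2(\mu)=p-\left(\frac{-1}{p}\right)$ for $p\nmid\mu$ and $N_2(0)=p+(p-1)\left(\frac{-1}{p}\right)$ follows from the standard substitution $b\mapsto \mu b^{-1}$ in the sum $\sum_b\left(\frac{b(\mu-b)}{p}\right)$; and the collapsed recursion $\rho_{k,\lambda}(p)=\bigl(p-\varepsilon\bigr)p^{k-2}+p\,\varepsilon\,\rho_{k-2,\lambda}(p)$ with $\varepsilon=(-1)^{(p-1)/2}$ is immediate from $\sum_\mu\rho_{k-2,\lambda-\mu}(p)=p^{k-2}$. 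The sign bookkeeping you worry about becomes trivial once you observe that $(-1)^{(p-1)(k-1)/4}=\varepsilon^{(k-1)/2}$ and $(-1)^{k(p-1)/4}=\varepsilon^{k/2}$, so that $t=(\varepsilon p)^{(k-1)/2}$ and $l=\varepsilon^{k/2}p^{(k-2)/2}$; substituting these into the recursion verifies both parities and both subcases ($p\mid\lambda$ or not) in two lines each. The only cost of your route relative to simply citing the classical result is the (routine) Jacobsthal-sum lemma; what it buys is a proof consistent with the paper's stated aim of avoiding character-sum machinery.
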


In the $p=2$ case, formulas for $\rho_{k,\lambda}(2^s)$ with $1\leq s\leq 3$ were given in \cite{concat} when $\lambda$ is even. Here we complete it.

\begin{prop}
\label{PROP2}
Let $k$ be a positive integer. Then:
\begin{itemize}
\item[i)] $\rho_{k,1}(2)=\rho_{k,0}(2)=2^{k-1}$,
\item[ii)] $\rho_{k,0}(4)=4^{-1 + k} + 2^{-1 + \frac{3\,k}{2}}\,\cos (\frac{k\,\pi }{4})$,
\item[iii)] $\rho_{k,1}(4) = 4^{k-1}+2^{\frac{3 k}{2}-1} \sin \left(\frac{\pi  k}{4}\right)$,
\item[iv)] $\rho_{k,2}(4)=4^{-1 + k} - 2^{-1 + \frac{3\,k}{2}}\,\cos (\frac{k\,\pi }{4})$
\item[v)] $\rho_{k,3}(4) = 4^{k-1}-2^{\frac{3 k}{2}-1} \sin \left(\frac{\pi  k}{4}\right)$,
\item[vi)] $\rho_{k,0}(8)=8^{-1 + k} + 2^{-2 + 2\,k}\,\cos (\frac{k\,\pi }{4}) + 2^{-2 + \frac{5\,k}{2}}\,\cos (\frac{k\,\pi }{4}) + 2^{-2 + 2\,k}\,\cos (\frac{3\,k\,\pi }{4})$
\item[vii)] $\rho_{k,1}(8)=  2^{2 k-3} \left(2^k+2^{\frac{k}{2}+1} \sin \left(\frac{\pi  k}{4}\right)+2 \sin \left(\frac{1}{4} \pi(k+1)\right)-2 \cos \left(\frac{1}{4} (3 \pi  k+\pi )\right)\right)$,
\item[viii)] $ \rho_{k,2}(8)=8^{-1 + k} - 2^{-2 + \frac{5\,k}{2}}\,\cos (\frac{k\,\pi }{4}) + 2^{-2 + 2\,k}\,\sin (\frac{k\,\pi }{4}) - 2^{-2 + 2\,k}\,\sin (\frac{3\,k\,\pi }{4})$
\item[ix)] $\rho_{k,3}(8)=2^{2 k-3} \left(2^k-2^{\frac{k}{2}+1} \sin \left(\frac{\pi  k}{4}\right)-2 \left(\cos \left(\frac{1}{4} \pi (k+1)\right)+\cos \left(\frac{3}{4} \pi  (k+1)\right)\right)\right)$,
\item[x)] $\rho_{k,4}(8)=8^{-1 + k} - 2^{-2 + 2\,k}\,\cos (\frac{k\,\pi }{4}) + 2^{-2 + \frac{5\,k}{2}}\,\cos (\frac{k\,\pi }{4}) - 2^{-2 + 2\,k}\,\cos (\frac{3\,k\,\pi }{4})$
\item[xi] $\rho_{k,5}(8)=2^{2 k-3} \left(2^k+2^{\frac{k}{2}+1} \sin \left(\frac{\pi  k}{4}\right)-2 \sin \left(\frac{1}{4} \pi (k+1)\right)+2 \cos \left(\frac{1}{4} (3 \pi  k+\pi )\right)\right)$,
\item[xii)] $\rho_{k,6}(8)=8^{-1 + k} - 2^{-2 + \frac{5\,k}{2}}\,\cos (\frac{k\,\pi }{4})-2^{-2 + 2\,k}\,\sin (\frac{k\,\pi }{4})+2^{-2 + 2\,k}\,\sin (\frac{3\,k\,\pi }{4})$,
\item[xiii)] $\rho_{k,7}(8)= 2^{2 k-3} \left(2^k-2^{\frac{k}{2}+1} \sin \left(\frac{\pi  k}{4}\right)-2 \sin \left(\frac{1}{4} (3 \pi  k+\pi)\right)+2 \cos \left(\frac{1}{4} \pi  (k+1)\right)\right)$.
\end{itemize}
\end{prop}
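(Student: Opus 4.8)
The plan is to reduce every item to a finite sum of multinomial coefficients and then isolate the relevant residue classes with a roots‑of‑unity filter, keeping everything elementary. Part (i) is immediate: modulo $2$ one has $x^2\equiv x$, so $x_1^2+\cdots+x_k^2\equiv x_1+\cdots+x_k\pmod 2$, and exactly $2^{k-1}$ of the $2^k$ tuples in $(\mathbb Z/2\mathbb Z)^k$ have a prescribed parity of their coordinate sum.

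For $\rho_{k,\lambda}(4)$ I would use that, modulo $4$, $x^2$ takes only the values $0$ and $1$, each on exactly two residues (namely $\{0,2\}$ and $\{1,3\}$); hence $x_i^2$ equals $0$ or $1$ according to the parity of $x_i$ and $x_1^2+\cdots+x_k^2\equiv c\pmod 4$, where $c$ is the number of odd coordinates. Grouping tuples by $c$ gives $\rho_{k,\lambda}(4)=2^k\sum_{c\equiv\lambda\,(4)}\binom kc$. The inner sum is handled by the $4$th‑root‑of‑unity filter $\sum_{c\equiv\lambda\,(4)}\binom kc=\tfrac14\sum_{r=0}^3 i^{-r\lambda}(1+i^r)^k$; the term $r=2$ vanishes and $(1\pm i)^k=2^{k/2}e^{\pm ik\pi/4}$, so the whole thing collapses to
$$\rho_{k,\lambda}(4)=4^{k-1}+2^{3k/2-1}\cos\Big(\tfrac{k\pi}{4}-\tfrac{\lambda\pi}{2}\Big).$$
Specialising $\lambda=0,1,2,3$ and using $\cos(\theta-\tfrac{\pi}{2})=\sin\theta$, $\cos(\theta-\pi)=-\cos\theta$, $\cos(\theta-\tfrac{3\pi}{2})=-\sin\theta$ yields (ii)–(v).

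For $\rho_{k,\lambda}(8)$ the same idea works with one extra class: modulo $8$, $x^2\in\{0,1,4\}$, where $0$ comes from $x\equiv 0\pmod 4$ (two residues), $4$ from $x\equiv 2\pmod 4$ (two residues), and $1$ from $x$ odd (four residues). Writing $a,b,c$ for the numbers of coordinates of these three types ($a+b+c=k$), such a tuple contributes $2^a2^b4^c=2^{k+c}$ choices and satisfies $x_1^2+\cdots+x_k^2\equiv 4b+c\pmod 8$, so $\rho_{k,\lambda}(8)=2^k\sum_{\,4b+c\equiv\lambda\,(8)}\binom{k}{a,b,c}2^{c}$. Applying the $8$th‑root‑of‑unity filter with $\omega=e^{i\pi/4}$, the multinomial theorem turns the inner sum into $(1+\omega^{4r}+2\omega^r)^k=(1+(-1)^r+2\omega^r)^k$, which is $0$ for $r=4$, equals $(2+2\omega^r)^k$ for the other even $r$ and $(2\omega^r)^k$ for odd $r$; evaluating $(2+2i)^k=2^{3k/2}e^{ik\pi/4}$, $(2+2i\!\cdot\!i)\cdots$ and pairing the terms $r$ and $8-r$ gives the single master identity
$$\rho_{k,\lambda}(8)=8^{k-1}+2^{5k/2-2}\cos\Big(\tfrac{k\pi}{4}-\tfrac{\lambda\pi}{2}\Big)+2^{2k-2}\Big(\cos\tfrac{(k-\lambda)\pi}{4}+\cos\tfrac{3(k-\lambda)\pi}{4}\Big),$$
valid for every $\lambda$. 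Substituting $\lambda=0,1,\dots,7$, reducing the arguments mod $2\pi$, and rewriting each $\cos\big(\tfrac{(k\mp\lambda)\pi}{4}+\varphi\big)$ by angle addition produces exactly the eight expressions (vi)–(xiii); the odd values of $\lambda$, which complete the list begun in \cite{concat}, are precisely those where the phase $\omega^{-r\lambda}$ converts cosines into sines.

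The only genuine obstacle is the trigonometric bookkeeping in this last step for $n=8$: one must verify that eight rather different‑looking closed forms are all specialisations of the one master identity, which is routine but error‑prone, and is best organised as "one identity plus repeated use of the angle‑addition formulas and $8$‑periodicity in the argument". As a cross‑check one may instead feed \eqref{gaus} the quadratic Gauss sums modulo $2$, $4$, $8$ — again finite sums, with $g(1)=2(1+i)$, $g(2)=0$ modulo $4$, and $g(1)=2\sqrt2(1+i)$, $g(2)=4(1+i)$, $g(4)=0$ modulo $8$ — obtaining the identical three master formulas and hence the same specialisations.
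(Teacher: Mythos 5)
Your argument is correct, and it is a genuinely different (and more self-contained) route than the one the paper takes. The paper's proof is a two-line sketch: it sets up the matrix recurrence $R_k(n)=M(n)\cdot R_{k-1}(n)$ with the circulant matrix $M(n)=(\rho_{1,i-j}(n))$ and then appeals to ``elementary linear algebra techniques,'' deferring the details to Lemma 4 of the cited reference \cite{concat}; the closed forms come from diagonalizing $M(n)$, whose eigenvalues are precisely the quadratic Gauss sums $\sum_x \omega^{rx^2}$. You instead classify the values of $x^2$ modulo $2$, $4$, $8$ together with their multiplicities, reduce each $\rho_{k,\lambda}$ to a (multi)nomial sum over residue classes, and extract the relevant class with a roots-of-unity filter. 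The two methods are secretly the same Fourier analysis on $\mathbb{Z}/n\mathbb{Z}$ --- your base $(1+(-1)^r+2\omega^r)$ \emph{is} the Gauss sum mod $8$, i.e.\ the $r$-th eigenvalue of $M(8)$ --- but your presentation is fully self-contained, produces the clean master identities
$$\rho_{k,\lambda}(4)=4^{k-1}+2^{3k/2-1}\cos\Bigl(\tfrac{k\pi}{4}-\tfrac{\lambda\pi}{2}\Bigr),\qquad
\rho_{k,\lambda}(8)=8^{k-1}+2^{5k/2-2}\cos\Bigl(\tfrac{k\pi}{4}-\tfrac{\lambda\pi}{2}\Bigr)+2^{2k-2}\Bigl(\cos\tfrac{(k-\lambda)\pi}{4}+\cos\tfrac{3(k-\lambda)\pi}{4}\Bigr),$$
which are arguably preferable to the eight separate expressions in the statement, and makes the final step a routine (if fiddly) check of angle-addition identities. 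I verified the specialisations $\lambda=0,\dots,7$ against items (vi)--(xiii) and they all match, so the ``trigonometric bookkeeping'' you flag as the only obstacle does go through.
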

\begin{proof}
Given $k,n\in \mathbb{N}$, let us define the matrix $M(n)=\left(\rho_{1,i-j}(n)\right)_{0\leq i,j\leq n-1}$. If we consider the column vector
$R_{k}(n)=\left(\rho_{k,i}(n) \right)_{0\leq i\leq n-1}$, the following recurrence relation holds:
$$R_{k}(n)=M(n)\cdot R_{k-1}(n).$$
Then, it is enough to apply elementary linear algebra techniques. For details, see \cite[Lemma 4]{concat}.
\end{proof}

\section{Preparatory results}

Given positive integers $k,n$ and $0\leq \lambda <n$, let $A(k,\lambda,n)$ denote
the set of solutions $(x_1,\dots,x_k)\in(\mathbb{Z}/n\mathbb{Z})^k$ of the congruence $x_1^2+\cdots +x_k^2\equiv \lambda\pmod{n}$. In particular, if $n=p^s$ is a prime-power, we have that
$$A(k,\lambda,p^s)=\{(x_1,...,x_k) \in (\mathbb{Z}_{p^s})^k :  x_1^2+\cdot\cdot\cdot+x_k^2 \equiv \lambda \pmod {p^s}\}.$$
Now, in this situation, let us define the following sets:
$$A_1(k,\lambda,p^s)=\{(x_1,...,x_k) \in A(k,\lambda,p^s) :  p\nmid x_i\ \textrm{for some $1\leq i\leq k$}\},$$
$$A_2(k,\lambda,p^s)=\{(x_1,...,x_k) \in A(k,\lambda,p^s) :  p\mid x_i\ \textrm{for every $1\leq i\leq k$}\}.$$
Note that $A(k,\lambda,p^s)=A_1(k,\lambda,p^s)\cup A_2(k,\lambda,p^s$. Hence, since $A_1(k,\lambda,p^s)$ and $A_2(k,\lambda,p^s$ are disjoint, if we define $\rho^{(1)}_{k,\lambda}(p^s):=\textrm{card}(A_1(k,\lambda,p^s))$ and $\rho^{(2)}_{k,\lambda}(p^s):=\textrm{card}(A_2(k,\lambda,p^s))$ it follows that
$$\rho_{k,\lambda}(p^s)=\rho^{(1)}_{k,\lambda}(p^s)+\rho^{(2)}_{k,\lambda}(p^s).$$

\begin{rem}
If $\gcd(\lambda,p)=1$; i.e., if $p\nmid\lambda$ then $A_2(k,\lambda,p^s)=\emptyset$. Thus, $\rho^{(2)}_{k,\lambda}(p^s)=0$ and it follows that $\rho^{(1)}_{k,\lambda}(p^s)=\rho_{k,\lambda}(p^s)$.
\end{rem}

This remark implies that the proof of the following result is the same as that of Lemmata 1 and 2 in \cite{concat}.

\begin{prop}
\ \label{rec1}
\begin{itemize}
\item[i)] Let $p^s$ be an odd prime-power with $s\geq 1$ and $0\leq \lambda <p^s$. Then, $$\rho^{(1)}_{k,\lambda}(p^s)=p^{(s-1)(k-1)} \rho^{(1)}_{k,\lambda}(p).$$
\item[ii)] Let $s\geq 3$ and $0\leq \lambda<2^s$. Then, $\rho_{k,\lambda}^{(1)}(2^{s})=2^{(s-3)(k-1)}\rho_{k,\lambda}^{(1)}(8)$.
\end{itemize}
\end{prop}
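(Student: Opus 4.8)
The plan is to establish a bijection between $A_1(k,\lambda,p^s)$ and a set that fibers over $A_1(k,\lambda,p)$ (resp. $A_1(k,\lambda,8)$) with fibers of constant size. The key observation is that if $(x_1,\dots,x_k)\in A_1(k,\lambda,p^s)$ with, say, $p\nmid x_1$, then once $x_2,\dots,x_k$ are fixed the equation $x_1^2\equiv \lambda-x_2^2-\cdots-x_k^2\pmod{p^s}$ is a quadratic congruence in one variable whose right-hand side is a unit modulo $p$ (because the original tuple lies in $A_1$ and, for odd $p$, if $p\mid x_1$ as well one reshuffles indices; one must handle the bookkeeping of "which coordinate is a unit" by inclusion–exclusion or by a direct counting argument as in \cite{concat}). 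Hensel's lemma then controls how solutions modulo $p$ lift to solutions modulo $p^s$.

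First I would treat part i). Fix the residues $(x_1,\dots,x_k)\bmod p$ forming a point of $A_1(k,\lambda,p)$, so some $x_i$ is a unit mod $p$. I would show that the number of lifts $(y_1,\dots,y_k)\bmod p^s$ with $y_j\equiv x_j\pmod p$ and $\sum y_j^2\equiv\lambda\pmod{p^s}$ equals exactly $p^{(s-1)(k-1)}$: the coordinates $y_j$ for $j\neq i$ can be chosen freely among their $p^{s-1}$ lifts, giving $p^{(s-1)(k-1)}$ choices, and then the single congruence $y_i^2\equiv \lambda-\sum_{j\neq i}y_j^2\pmod{p^s}$ has, by Hensel's lemma applied at the simple root $x_i$ (here $p$ odd makes $2y_i$ a unit), exactly one solution $y_i$ reducing to $x_i$. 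Summing over the $\rho^{(1)}_{k,\lambda}(p)$ base points gives the claimed identity. The only subtlety is the definition of $\rho^{(1)}$ via "some coordinate is a unit" rather than "a fixed coordinate is a unit", so strictly one argues on the level of the partition of $A_1$ according to the least index $i$ with $p\nmid x_i$; this is exactly the argument of Lemmata 1 and 2 in \cite{concat}, invoked via the preceding Remark, and I would simply cite it.

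For part ii), the structure is identical but one starts at $s=3$ instead of $s=1$, because Hensel's lemma at the prime $2$ requires lifting from modulus $8=2^3$ (the derivative $2y_i$ is $2$ times a unit, so one needs the solution known modulo $2^3$ to guarantee unique lifting modulo $2^s$ for $s\geq 3$). Concretely: fix $(x_1,\dots,x_k)\bmod 8$ in $A_1(k,\lambda,8)$ with some $x_i$ odd; the $k-1$ other coordinates lift in $2^{s-3}$ ways each, and the congruence $y_i^2\equiv \lambda-\sum_{j\neq i}y_j^2\pmod{2^s}$ has a unique solution $\equiv x_i\pmod 8$ by the $2$-adic form of Hensel's lemma (since $\lambda-\sum_{j\neq i}y_j^2$ is odd, being $\equiv x_i^2\pmod 8$). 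This yields $2^{(s-3)(k-1)}\rho^{(1)}_{k,\lambda}(8)$ as desired.

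The main obstacle is not any single computation but making the "some coordinate is a unit" bookkeeping airtight while keeping the fiber count exactly $p^{(s-1)(k-1)}$ (resp. $2^{(s-3)(k-1)}$): one must check that lifting from the base does not accidentally over- or under-count tuples that have several unit coordinates. The cleanest route is to organize $A_1$ by the position of its first unit coordinate, so the reduction map $A_1(k,\lambda,p^s)\to A_1(k,\lambda,p)$ is well-behaved on each block; since this is precisely what \cite[Lemmata 1, 2]{concat} does (and the cited Remark guarantees the present $A_1$-setting is identical to theirs), I would present the proof as an application of those lemmata together with the Remark, rather than reproving Hensel-lifting from scratch.
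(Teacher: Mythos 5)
Your part i) is sound: for odd $p$ the reduction map $A_1(k,\lambda,p^s)\to A_1(k,\lambda \bmod p,p)$ really does have constant fibers of size $p^{(s-1)(k-1)}$, because a unit square $c$ has exactly two square roots $\pm y_0$ modulo $p^s$ and these are distinct modulo $p$, so exactly one lies over $x_i$. (Your worry about the ``some coordinate is a unit'' bookkeeping is unfounded: the reduction map is globally well defined on $A_1$ and its fibers partition it, so no inclusion--exclusion is needed.) The paper itself offers no argument beyond citing Lemmata 1 and 2 of \cite{concat} via the preceding Remark, so your fallback position coincides with the paper's proof.

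However, the Hensel-lifting argument you sketch for part ii) is genuinely wrong, and would fail if written out. The claim that $y_i^2\equiv \lambda-\sum_{j\neq i}y_j^2\pmod{2^s}$ has a \emph{unique} solution $y_i\equiv x_i\pmod 8$ is false for $s\geq 4$: an odd residue $c\equiv 1\pmod 8$ has exactly four square roots modulo $2^s$, namely $\pm y_0$ and $\pm y_0+2^{s-1}$, and for $s\geq 4$ these occupy only the two residues $\pm y_0$ modulo $8$, each hit twice. Concretely, take $k=1$, $s=4$, $\lambda=1$: the fibers of $A_1(1,1,16)=\{1,7,9,15\}$ over $A_1(1,1,8)=\{1,3,5,7\}$ have sizes $2,0,0,2$, not the constant $2^{(s-3)(k-1)}=1$. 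The $2$-adic Hensel lemma only gives uniqueness of the root congruent to $x_i$ modulo $4$ in $\mathbb{Z}_2$, not uniqueness modulo $2^s$ within a residue class modulo $8$. So the identity in part ii) holds only in aggregate, averaged over the fiber, and a per-base-point lifting count cannot establish it; one needs the different mechanism of \cite[Lemma 2]{concat} (e.g.\ the one-step passage from $2^s$ to $2^{s+1}$, where writing $y_j=x_j+2^st_j$ shows each solution modulo $2^s$ lifts to either $2^k$ or $0$ solutions modulo $2^{s+1}$, together with an argument that exactly half of them lift). If you intend to prove ii) rather than cite it, this is the gap to fill.
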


Proposition \ref{rec1} provides us with a recursive relation for $\rho^{(1)}_{k,\lambda}(p^s)$. Note that this result implies that we will have to study the case $p=2$ separately.

Now we turn to $\rho^{(2)}_{k,\lambda}(p^s)$. In this case, we have the following result.

\begin{prop}\label{rec2}
Let $p^s$ be a prime-power, with $s\geq 1$ and let $0\leq \lambda<p^s$. Then,
$$\rho^{(2)}_{k,\lambda}(p^s)=\begin{cases}1, & \textrm{if $s=1$ and $\lambda=0$};\\ p^k, & \textrm{if $s=2$ and $\lambda=0$};\\ p^k\rho_{k,\lambda/p^2}(p^{s-2}), & \textrm{if $s\geq 3$ and $p^2\mid\lambda$};\\ 0, & \textrm{otherwise}.\end{cases}$$
\end{prop}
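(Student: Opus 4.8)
The plan is to analyze the structure of a tuple $(x_1,\dots,x_k) \in A_2(k,\lambda,p^s)$, where by definition every coordinate satisfies $p \mid x_i$. Write $x_i = p y_i$ with $y_i$ ranging over $\mathbb{Z}/p^{s-1}\mathbb{Z}$; however, one must be careful because as $x_i$ ranges over the $p^{s-1}$ multiples of $p$ in $\mathbb{Z}/p^s\mathbb{Z}$, each residue $y_i \pmod{p^{s-1}}$ is hit exactly once, so this substitution is a bijection. Substituting, the congruence $\sum x_i^2 \equiv \lambda \pmod{p^s}$ becomes $p^2 \sum y_i^2 \equiv \lambda \pmod{p^s}$. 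The first key observation is that this forces $p^2 \mid \lambda$ (since the left side is divisible by $p^2$, and $s \geq 2$ is needed for this to carry content when $s=2$, while for $s=1$ no such constraint arises and the only possibility is $\lambda=0$). This disposes of the "otherwise" branch, and also handles the $s=1$ case: if $s=1$ then $A_2$ consists of tuples all of whose entries are $0$ in $\mathbb{Z}/p\mathbb{Z}$, so the sum is $0$, giving $\rho^{(2)}_{k,0}(p)=1$ and $\rho^{(2)}_{k,\lambda}(p)=0$ for $\lambda \neq 0$.

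Next I would treat $s \geq 3$ with $p^2 \mid \lambda$. Here the congruence $p^2 \sum y_i^2 \equiv \lambda \pmod{p^s}$ is equivalent to $\sum y_i^2 \equiv \lambda/p^2 \pmod{p^{s-2}}$, a congruence in the variables $y_i \in \mathbb{Z}/p^{s-1}\mathbb{Z}$. For a fixed solution $\bar y = (\bar y_1,\dots,\bar y_k)$ modulo $p^{s-2}$, the number of lifts to $\mathbb{Z}/p^{s-1}\mathbb{Z}$ is $p^k$, since each coordinate has $p^{s-1}/p^{s-2} = p$ preimages and they can be chosen independently. Therefore $\rho^{(2)}_{k,\lambda}(p^s) = p^k \cdot \rho_{k,\lambda/p^2}(p^{s-2})$, which is exactly the claimed formula. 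I should double-check the edge where $s-2$ could be small: for $s=3$ this gives $p^k \rho_{k,\lambda/p^2}(p)$, and $\lambda/p^2$ is a residue mod $p$, which is consistent with $0 \leq \lambda < p^s$ implying $0 \leq \lambda/p^2 < p^{s-2}$.

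Finally, the $s=2$ case with $\lambda = 0$ needs separate treatment because the recursive formula would reference $\rho_{k,0}(p^0) = \rho_{k,0}(1) = 1$ and give $p^k$, so in fact this is consistent with the general $s\geq 2$ pattern, but it is cleaner to state it directly: with $s=2$, the substitution $x_i = py_i$ forces $p^2 \sum y_i^2 \equiv 0 \pmod{p^2}$, which is automatically satisfied for all $y_i \in \mathbb{Z}/p\mathbb{Z}$, giving $p^k$ tuples $(y_1,\dots,y_k)$ and hence $\rho^{(2)}_{k,0}(p^2) = p^k$; if $\lambda \neq 0$ (so $0 < \lambda < p^2$), then $p^2 \sum y_i^2$ is never $\equiv \lambda \pmod{p^2}$ unless $p^2 \mid \lambda$, impossible, so the count is $0$.

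The argument is almost entirely bookkeeping, so there is no deep obstacle; the main thing to get right is the counting of lifts and the careful case distinction on $s \in \{1,2\}$ versus $s \geq 3$, together with verifying that the divisibility condition $p^2 \mid \lambda$ is both necessary and (in the $s\geq 3$ case) exactly what makes the reduced congruence well-posed. One small subtlety worth stating explicitly is that the substitution $x_i \mapsto y_i$ with $x_i = p y_i$ is a genuine bijection between $\{x \in \mathbb{Z}/p^s\mathbb{Z} : p \mid x\}$ and $\mathbb{Z}/p^{s-1}\mathbb{Z}$, which is what licenses replacing the $x$-count by the $y$-count.
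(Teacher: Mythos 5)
Your proof is correct and follows essentially the same route as the paper's: both write each coordinate as $p$ times a residue, observe that the congruence forces $p^2\mid\lambda$ (disposing of the ``otherwise'' branch and the small $s$ cases), reduce to $\sum y_i^2\equiv\lambda/p^2\pmod{p^{s-2}}$, and count $p^k$ lifts per solution --- the paper's parametrization $px_i+\alpha_i p^{s-1}$ is exactly your fiber count of lifts from $\mathbb{Z}/p^{s-2}\mathbb{Z}$ to $\mathbb{Z}/p^{s-1}\mathbb{Z}$. No gaps.
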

\begin{proof}
If $s=1$ and $\lambda=0$, it is obvious that the only $k$-tuple $(x_1,\dots,x_k)$ such that $x_1^2+\cdots +x_k^2\equiv 0\pmod{p}$ and $p\mid x_i$ for every $i$ is $(0,\dots, 0)$. Hence, $\rho^{(2)}_{k,\lambda}(p^s)=1$ in this case.

Secondly, if $s=2$ and $\lambda=0$, $\rho^{(2)}_{k,\lambda}(p^2)=\rho^{(2)}_{k,0}(p^2)$ is the number of $k$-tuples $x_1,\dots,x_k)$ such that $x_1^2+\cdots +x_k^2\equiv 0\pmod{p^2}$ and $p\mid x_i$ for every $i$. It is obvious that there are $p^k$ such $k$-tuples because $x_i$ can be any multiple of $p$ in $\mathbb{Z}/p^2\mathbb{Z}$.

Now, assume that $p^2\mid\lambda$ and $s\geq 3$. First of all, using Euclid's algorithm, it is easy to see that every element of $A_2(k,\lambda,p^s)$ can be written in the form $(px_1+\alpha_1p^{s-1},\dots,px_k+\alpha_kp^{s-1})$ with $0\leq \alpha_i\leq p-1$ and $(x_1\dots,x_k)\in A(k,\lambda/p^2,p^{s-2})$.

On the other hand, let $(x_1,\dots,x_k)\in A(k,\lambda/p^2,p^{s-2})$; i.e., $x_1^2+\cdots +x_k^2\equiv \lambda/p^2\pmod{p^{s-2}}$. Clearly the set
$$\{(px_1+\alpha_1p^{s-1},\dots,px_k+\alpha_kp^{s-1}) : 0\leq \alpha_i\leq p-1\ \textrm{for every $1\leq i\leq k$}\}$$
is contained in $A_2(k,\lambda,p^s)$ because
$$(px_1+\alpha_1p^{s-1})^2+\cdots +(px_k+\alpha_kp^{s-1})^2\equiv p^2(x_1^2+\cdots+ x_k^2)\equiv \lambda\pmod{p^s}$$
and all its elements are incongruent modulo $p^s$. Thus, every element of the set $A(k,\lambda/p^2,p^{s-2})$ gives rise to $p^k$ different elements of $A_2(k,\lambda,p^s)$ and the result follows.

Finally, in the remaining cases (i.e., if $s=1$ or $2$ with $0<\lambda<p^2$ or if $s\geq 3$ with $p^2\nmid\lambda$) it is obvious that $A_2(k,\lambda,p^s)=\emptyset$ and hence $\rho^{(2)}_{k,\lambda}(p^s)=0$, as claimed.
\end{proof}

With the help of Proposition \ref{rec1} and Proposition \ref{rec2} we can give recursive formulas that express the value of $\rho_{k,\lambda}(p^s)$. First, we deal with the odd $p$ and non-zero $\lambda$ case.

\begin{teor} \label{oddnz}
Let $p^s$ be an odd prime-power and let $0<\lambda<p^s$ be an integer. Put $\lambda=p^r\lambda'$ with $0\leq r<s$ and $p\nmid\lambda'$. Then,
$$\rho_{k,\lambda}(p^s)=\sum_{i=0}^{\lfloor r/2\rfloor}   p^{ki+(s-2i-1)(k-1)}\cdot\rho_{k,\lambda/p^{2i}}^{(1)}(p).$$
\end{teor}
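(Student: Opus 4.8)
The plan is to unwind the splitting $\rho_{k,\lambda}(p^s)=\rho^{(1)}_{k,\lambda}(p^s)+\rho^{(2)}_{k,\lambda}(p^s)$ recursively, using Proposition~\ref{rec1}(i) to evaluate the $A_1$-part at each level and Proposition~\ref{rec2} to peel off two powers of $p$ from the $A_2$-part. First I would record the base of the recursion: the formula must be read with the convention $\rho^{(1)}_{k,\mu}(p^s)=p^{(s-1)(k-1)}\rho^{(1)}_{k,\mu}(p)$ from Proposition~\ref{rec1}(i), valid for any $0\le\mu<p^s$ and any $s\ge1$; note that if $p\nmid\mu$ then $\rho^{(1)}_{k,\mu}(p)=\rho_{k,\mu}(p)$, while if $p\mid\mu$ one still has a well-defined quantity $\rho^{(1)}_{k,\mu}(p)=\rho^{(1)}_{k,0}(p)=\rho_{k,0}(p)-1$ (the $k$-tuples with some coordinate a unit summing to $0$). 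So the summands $\rho^{(1)}_{k,\lambda/p^{2i}}(p)$ make sense for every $i\le\lfloor r/2\rfloor$.

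Next I would set up the induction on $s$ (equivalently, on the number of times Proposition~\ref{rec2} can be applied, which is controlled by $\lfloor r/2\rfloor$). Write $\lambda=p^r\lambda'$ with $p\nmid\lambda'$ and $0<\lambda<p^s$, so $0\le r<s$. Applying the basic decomposition,
\[
\rho_{k,\lambda}(p^s)=\rho^{(1)}_{k,\lambda}(p^s)+\rho^{(2)}_{k,\lambda}(p^s).
\]
By Proposition~\ref{rec1}(i) the first term equals $p^{(s-1)(k-1)}\rho^{(1)}_{k,\lambda}(p)$, which is exactly the $i=0$ term of the claimed sum. For the second term I distinguish two cases according to $r$. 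If $r\in\{0,1\}$ then $p^2\nmid\lambda$, and since also $\lambda\neq0$ and $s\ge r+1$, Proposition~\ref{rec2} gives $\rho^{(2)}_{k,\lambda}(p^s)=0$; meanwhile $\lfloor r/2\rfloor=0$, so the claimed sum has only the $i=0$ term and the identity holds. If $r\ge2$ then $p^2\mid\lambda$ and $s\ge3$, so Proposition~\ref{rec2} yields $\rho^{(2)}_{k,\lambda}(p^s)=p^k\rho_{k,\lambda/p^2}(p^{s-2})$. Now $\lambda/p^2=p^{r-2}\lambda'$ with the same unit $\lambda'$, and $0<\lambda/p^2<p^{s-2}$, so the induction hypothesis applies to $\rho_{k,\lambda/p^2}(p^{s-2})$ with the parameters $(s-2,r-2)$; note $\lfloor(r-2)/2\rfloor=\lfloor r/2\rfloor-1$.

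The final step is the bookkeeping: substitute the inductive formula for $\rho_{k,\lambda/p^2}(p^{s-2})$, multiply through by $p^k$, and re-index the sum. The inductive hypothesis gives
\[
p^k\rho_{k,\lambda/p^2}(p^{s-2})=p^k\sum_{i=0}^{\lfloor r/2\rfloor-1} p^{ki+((s-2)-2i-1)(k-1)}\,\rho^{(1)}_{k,(\lambda/p^2)/p^{2i}}(p).
\]
Since $(\lambda/p^2)/p^{2i}=\lambda/p^{2(i+1)}$ and $p^k\cdot p^{ki+((s-2)-2i-1)(k-1)}=p^{k(i+1)+(s-2(i+1)-1)(k-1)}$, the shift $j=i+1$ turns this sum into $\sum_{j=1}^{\lfloor r/2\rfloor} p^{kj+(s-2j-1)(k-1)}\rho^{(1)}_{k,\lambda/p^{2j}}(p)$, i.e.\ precisely the terms $i=1,\dots,\lfloor r/2\rfloor$ of the asserted formula. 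Adding the $i=0$ term coming from $\rho^{(1)}_{k,\lambda}(p^s)$ completes the induction. I expect the only delicate points to be the edge-case analysis when $r=0,1$ (ensuring the $A_2$-part really vanishes and matches an empty tail of the sum) and confirming that the exponent arithmetic after the index shift lines up exactly; the structural recursion itself is immediate from Propositions~\ref{rec1} and~\ref{rec2}.
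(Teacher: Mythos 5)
Your proposal is correct and follows essentially the same route as the paper: both decompose $\rho_{k,\lambda}(p^s)$ as $\rho^{(1)}+\rho^{(2)}$ and repeatedly apply Propositions \ref{rec1} and \ref{rec2}, the only difference being that you package the iteration as a clean induction on $s$ with the index shift and the $r\le 1$ termination made explicit, whereas the paper unfolds the recursion informally.
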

\begin{proof}
We have that $\rho_{k,\lambda}(p^s)=\rho^{(1)}_{k,\lambda}(p^s)+\rho^{(2)}_{k,\lambda}(p^s)$. If $r\leq 1$, then Proposition \ref{rec2} implies that $\rho^{(2)}_{k,\lambda}(p^s)=0$. Hence,
$\rho_{k,\lambda}(p^s)=\rho^{(1)}_{k,\lambda}(p^s)=p^{(s-1)(k-1)}\rho^{(1)}_{k,\lambda}(p)$ due to Proposition \ref{rec1} and we are done.

Now, if $r\geq 2$ the $s\geq 3$ and Proposition \ref{rec2} implies that
$$\rho^{(2)}_{k,\lambda}(p^s)=p^k\rho_{k,\lambda/p^2}(p^{s-2})=p^k\rho^{(1)}_{k,\lambda/p^2}(p^{s-2})+p^k\rho^{(2)}_{k,\lambda/p^2}(p^{s-2}).$$
Thus, using Proposition \ref{rec1} again we obtain that
$$\rho_{k,\lambda}(p^s)=p^{(s-1)(k-1)}\rho^{(1)}_{k,\lambda}(p)+p^k p^{(s-3)(k-1)}\rho^{(1)}_{k,\lambda/p^2}(p)+p^k\rho^{(2)}_{k,\lambda/p^2}(p^{s-2}).$$
Since $\lambda/p^2=p^{r-2}\lambda'$, if $r-2\leq 1$, then $\rho^{(2)}_{k,\lambda/p^2}(p^{s-2})=0$ by Proposition \ref{rec2} and we are done.

If, on the other hand, $r\geq 4$ then $s-2\geq 3$ and Proposition \ref{rec2} implies that $\rho^{(2)}_{k,\lambda/p^2}(p^{s-2})=p^k\rho^{(2)}_{k,\lambda/p^4}(p^{s-4})$. Thus, using Proposition \ref{rec1} again, it follows that
$$\rho_{k,\lambda}(p^s)=\sum_{i=0}^{2} \Big(p^{ki+(s-2i-1)(k-1)}\rho_{k,\lambda/p^{2i}}^{(1)}(p)\Big)+p^{2k}\rho^{(2)}_{k,\lambda/p^4}(p^{s-4}).$$

Clearly this process can be iteratively repeated until we reach the expression
$$\rho_{k,\lambda}(p^s)=\sum_{i=0}^{\lfloor r/2\rfloor} \Big(p^{ki+(s-2i-1)(k-1)}\cdot\rho_{k,\lambda/p^{2i}}^{(1)}(p)\Big) + p^{k\lfloor r/2\rfloor} \rho^{(2)}_{k,\lambda/p^{2\lfloor r/2\rfloor}}(p^{s-2\lfloor r/2\rfloor})$$
and, since $p^2\nmid \lambda/p^{2\lfloor r/2\rfloor}$ the result follows from Proposition \ref{rec2}.
\end{proof}

Now, we turn to the $\lambda=0$ case for an odd prime $p$.

\begin{teor} \label{oddz}
Let $p^s$ be an odd prime-power. Then,
$$\rho_{k,0}(p^s)=\sum_{i=0}^{\lfloor (s-1)/2\rfloor} \Big(p^{ki}p^{(s-2i-1)(k-1)}\cdot \rho^{(1)}_{k,p^{s-2i}}(p)\Big) + p^{\lfloor s/2\rfloor k}.$$
\end{teor}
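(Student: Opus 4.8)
The plan is to imitate the proof of Theorem \ref{oddnz}: write $\rho_{k,0}(p^s)=\rho^{(1)}_{k,0}(p^s)+\rho^{(2)}_{k,0}(p^s)$ and then peel off $\rho^{(1)}$-contributions one at a time using Proposition \ref{rec1}(i), while iterating Proposition \ref{rec2} on the $\rho^{(2)}$-part. The one structural difference is that here $\lambda=0$, so $\lambda/p^{2}=0$ at every stage; consequently the third case of Proposition \ref{rec2} keeps applying, giving $\rho^{(2)}_{k,0}(p^{t})=p^{k}\rho_{k,0}(p^{t-2})$ for every $t\geq 3$. The iteration therefore never terminates because of a ``$p^{2}\nmid\lambda$'' obstruction; instead it stops when the exponent drops below $3$, reaching $\rho^{(2)}_{k,0}(p^{2})=p^{k}$ if $s$ is even and $\rho^{(2)}_{k,0}(p)=1$ if $s$ is odd. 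This accounts for the single boundary term $p^{\lfloor s/2\rfloor k}$ in the statement.

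Carrying this out, combining Proposition \ref{rec1}(i) and Proposition \ref{rec2} gives, for $s\geq 3$,
$$\rho_{k,0}(p^{s})=p^{(s-1)(k-1)}\rho^{(1)}_{k,0}(p)+p^{k}\rho_{k,0}(p^{s-2}),$$
and unwinding this recursion for $m$ steps yields
$$\rho_{k,0}(p^{s})=\sum_{i=0}^{m-1}p^{ki}\,\rho^{(1)}_{k,0}(p^{s-2i})+p^{km}\,\rho_{k,0}(p^{s-2m}).$$
Choosing $m$ so that $s-2m\in\{1,2\}$ and using the base evaluations $\rho_{k,0}(p)=\rho^{(1)}_{k,0}(p)+1$ and $\rho_{k,0}(p^{2})=p^{k-1}\rho^{(1)}_{k,0}(p)+p^{k}$ (both immediate from Propositions \ref{rec1} and \ref{rec2}), the last term contributes one more $\rho^{(1)}$-summand together with $p^{\lfloor s/2\rfloor k}$. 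Finally, Proposition \ref{rec1}(i) rewrites each $\rho^{(1)}_{k,0}(p^{s-2i})$ as $p^{(s-2i-1)(k-1)}\rho^{(1)}_{k,0}(p)$, and one notes that $\rho^{(1)}_{k,a}(p)$ depends only on $a\bmod p$, so that $\rho^{(1)}_{k,0}(p)=\rho^{(1)}_{k,p^{s-2i}}(p)$ since $p\mid p^{s-2i}$ whenever $2i\leq s-1$. Collecting terms gives exactly the claimed identity.

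There is no genuine obstacle; the proof is entirely mechanical once the machinery is in place, and could equally well be phrased as an induction on $s$ (base cases $s=1,2$; inductive step for $s\geq 3$ using the displayed recursion, substituting the hypothesis for $\rho_{k,0}(p^{s-2})$, multiplying by $p^{k}$, and reindexing the sum by $j=i+1$). The only points demanding attention are bookkeeping ones: (a) verifying that the two floor functions shift consistently under $s\mapsto s-2$, via $\lfloor(s-3)/2\rfloor+1=\lfloor(s-1)/2\rfloor$ and $\lfloor(s-2)/2\rfloor+1=\lfloor s/2\rfloor$, so that the $j=0$ term (coming from $\rho^{(1)}_{k,0}(p^{s})$) exactly fills in the missing index; (b) the identification $\rho^{(1)}_{k,0}(p)=\rho^{(1)}_{k,p^{s-2i}}(p)$ mentioned above; and (c) checking that the terminal term of the iteration is the pure power $p^{\lfloor s/2\rfloor k}$ with no residual $\rho^{(1)}$-factor, which is precisely what the $s=1$ and $s=2$ evaluations encode.
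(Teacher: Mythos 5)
Your proposal is correct and follows essentially the same route as the paper: decompose $\rho_{k,0}(p^s)=\rho^{(1)}_{k,0}(p^s)+\rho^{(2)}_{k,0}(p^s)$, apply Proposition \ref{rec1}(i) to the first part and iterate Proposition \ref{rec2} on the second until the exponent reaches $1$ or $2$, which produces the terminal term $p^{\lfloor s/2\rfloor k}$. Your explicit bookkeeping (the floor-function shifts, the identification $\rho^{(1)}_{k,p^{s-2i}}(p)=\rho^{(1)}_{k,0}(p)$, and the base evaluations at $s=1,2$) matches and if anything slightly sharpens the paper's more telegraphic argument.
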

\begin{proof}
First of all, note that $\rho_{k,0}(p^s)=\rho_{k,p^s}(p^s)$. Then we can proceed recursively just like in Theorem \ref{oddnz} because $\rho_{k,p^s}(p^s)=\rho^{(1)}_{k,p^s}(p^s)+\rho^{(2)}_{k,p^s}(p^s)$.

If $s=1$, then $\rho_{k,p}(p)=\rho^{(1)}_{k,p}(p)+\rho^{(2)}_{k,p}(p)=\rho^{(1)}_{k,p}(p)+1$ due to Proposition \ref{rec2}.

If $s=2$, $\rho_{k,p^2}(p^2)=\rho^{(1)}_{k,p^2}(p^2)+\rho^{(2)}_{k,p^2}(p^2)=p^{k-1}\rho^{(1)}_{k,p^2}(p)+p^k$ due to Propositions \ref{rec1} and \ref{rec2}.

Now, if $s\geq 3$, then Propositions \ref{rec1} and \ref{rec2} imply that
$$\rho_{k,p^s}(p^s)=\rho^{(1)}_{k,p^s}(p^s)+\rho^{(2)}_{k,p^s}(p^s)=p^{(s-1)(k-1)}\rho_{k,p^s}(p)+p^k\rho_{k,p^{s-2}}(p^{s-2})$$
If $s-2=1$ or $s-2=2$, then we apply Proposition \ref{rec2} and the result follows. If, on the other hand, $s-2\geq 3$ then
$$\rho_{k,p^s}(p^s)=p^{(s-1)(k-1)}\rho_{k,p^s}(p)+p^k\Big(\rho^{(1)}_{k,p^{s-2}}(p^{s-2})+\rho^{(2)}_{k,p^{s-2}}(p^{s-2})$$
so applying Propositions \ref{rec1} and \ref{rec2} again we get that
$$\rho_{k,p^s}(p^s)=p^{(s-1)(k-1)}\rho_{k,p^s}(p)+p^kp^{(s-3)(k-1)}\rho_{k,p^{s-2}}(p)+p^{k}\rho_{k,p^{s-4}}(p^{s-4}).$$

To conclude the proof it is enough to observe that the previous process will end after $\lfloor (s-1)/2\rfloor$ steps and hence after $\lfloor (s-1)/2\rfloor + 1$ applications of Propositions \ref{rec1} and \ref{rec2}.
\end{proof}

Now, for the case $p=2$ and non-zero $\lambda$ we have the following result.

\begin{teor} \label{2nz}
Let $2^s$ be a power of two ($s\geq 1$) and let $0<\lambda<2^s$ be an integer. Put $\lambda=2^r\lambda'$ with $0\leq r<s$ and odd $\lambda'$.
Then,
$$\rho_{k,\lambda}(2^s)=\sum_{i=0}^{\lfloor
\frac{r}{2}\rfloor-1}\Big(2^{ki}2^{(s-2i-3)(k-1)}\cdot\rho^{(1)}_{k,\lambda/2^{2i}}(8)
\Big) +
2^{k\lfloor\frac{r}{2}\rfloor}\rho^{(1)}_{k,\lambda/2^{2\lfloor\frac{r}{2}\rfloor}}(2^{s-2\lfloor\frac{r}{2}\rfloor}).$$
\end{teor}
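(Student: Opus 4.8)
The plan is to imitate the proof of Theorem~\ref{oddnz}, with $8$ now playing the role formerly played by $p$, but being careful that Proposition~\ref{rec1}~ii) may be applied only when the modulus is at least $8$. As in that proof, one starts from $\rho_{k,\lambda}(2^s)=\rho^{(1)}_{k,\lambda}(2^s)+\rho^{(2)}_{k,\lambda}(2^s)$. If $r\leq 1$, then $4\nmid\lambda$ and $\lambda\neq 0$, so Proposition~\ref{rec2} gives $\rho^{(2)}_{k,\lambda}(2^s)=0$; since $\lfloor r/2\rfloor=0$ in this case, the asserted identity reads $\rho_{k,\lambda}(2^s)=\rho^{(1)}_{k,\lambda}(2^s)$, and there is nothing more to do.

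Assume now $r\geq 2$. The crucial elementary observation is that the hypothesis $r<s$ forces $s-2j\geq 3$ whenever $r-2j\geq 2$, because then $2j\leq r-2$ and hence $s-2j\geq s-r+2\geq 3$. Consequently, for each index $i$ with $1\leq i\leq\lfloor r/2\rfloor$ --- these being exactly the indices with $r-2(i-1)\geq 2$ --- the third clause of Proposition~\ref{rec2} applies to $\rho^{(2)}_{k,\lambda/2^{2(i-1)}}(2^{s-2(i-1)})$ and rewrites the corresponding term as
$$2^{(i-1)k}\rho^{(2)}_{k,\lambda/2^{2(i-1)}}(2^{s-2(i-1)})=2^{ik}\rho^{(1)}_{k,\lambda/2^{2i}}(2^{s-2i})+2^{ik}\rho^{(2)}_{k,\lambda/2^{2i}}(2^{s-2i}).$$
Applying this successively for $i=1,\dots,\lfloor r/2\rfloor$ yields
$$\rho_{k,\lambda}(2^s)=\sum_{i=0}^{\lfloor r/2\rfloor}2^{ki}\rho^{(1)}_{k,\lambda/2^{2i}}(2^{s-2i})+2^{k\lfloor r/2\rfloor}\rho^{(2)}_{k,\lambda/2^{2\lfloor r/2\rfloor}}(2^{s-2\lfloor r/2\rfloor}),$$
the $i=0$ summand being the original $\rho^{(1)}_{k,\lambda}(2^s)$.

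Now $\lambda/2^{2\lfloor r/2\rfloor}=2^{r-2\lfloor r/2\rfloor}\lambda'$ with $r-2\lfloor r/2\rfloor\in\{0,1\}$, so it is a nonzero integer not divisible by $4$, and Proposition~\ref{rec2} makes the trailing $\rho^{(2)}$-term vanish. It remains to lower the moduli: by the observation above, $s-2i\geq 3$ for every $i\leq\lfloor r/2\rfloor-1$ (since $r-2i\geq 2$ there), so Proposition~\ref{rec1}~ii) replaces $\rho^{(1)}_{k,\lambda/2^{2i}}(2^{s-2i})$ by $2^{(s-2i-3)(k-1)}\rho^{(1)}_{k,\lambda/2^{2i}}(8)$ for those $i$, giving precisely the sum in the statement, while the term $i=\lfloor r/2\rfloor$ is left as it stands and supplies the last summand.

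I expect the only genuinely delicate point to be this final step: one must resist reducing the $i=\lfloor r/2\rfloor$ term, because $s-2\lfloor r/2\rfloor$ may equal $1$ or $2$ --- for instance when $r=s-1$ --- and in those cases Proposition~\ref{rec1}~ii) is not available. This asymmetry is exactly why the $p=2$ formula is genuinely different in shape from that of Theorem~\ref{oddnz}: the ``last'' contribution cannot, in general, be pushed down to modulus $8$.
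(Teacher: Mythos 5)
Your proof is correct and follows the same route the paper intends: it repeats the iterative argument of Theorem~\ref{oddnz}, using Proposition~\ref{rec2} to peel off the $\rho^{(2)}$ contributions and Proposition~\ref{rec1}~ii) to lower the moduli to $8$, with the correct caveat that the final term must be left at modulus $2^{s-2\lfloor r/2\rfloor}$ since that exponent may be $1$ or $2$. The paper's own proof is just the one-line remark that the argument is the same as for Theorem~\ref{oddnz}; you have supplied exactly the details it omits.
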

\begin{proof}
The proof goes exactly as in Theorem \ref{oddnz} using Proposition \ref{rec1} and Proposition \ref{rec2} repeatedly. Note that, in the cases $r=0$ and $r=1$ we consider that if the upper summation limit is $-1$, the sum is empty.
\end{proof}

And finally, the case $p=2$, and $\lambda=0$ is given by the following result.

\begin{teor} \label{2z}
Let $2^s$ be a power of two ($s\geq 1$). Then,

$$\rho_{k,0}(2^s)=\sum_{i=0}^{\lfloor \frac{s-1}{2}\rfloor-1} \Big(2^{ki}2^{(s-2i-3)(k-1)}\cdot \rho^{(1)}_{k,2^{s-2i}}(8)\Big) + 2^{\lfloor\frac{s-1}{2}\rfloor
k}\cdot\rho^{(1)}_{k,0}(2^{s-2\lfloor\frac{s-1}{2}\rfloor})
+2^{\lfloor \frac{s}{2}\rfloor k}.$$
\end{teor}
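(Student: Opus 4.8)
The plan is to follow the proof of Theorem~\ref{oddz} almost verbatim, replacing the odd-prime recursion Proposition~\ref{rec1}(i) by its power-of-two analogue Proposition~\ref{rec1}(ii); the only genuine difference is that the recursion now bottoms out at exponent $1$ or $2$ (rather than just $1$), and this is what produces the final term $2^{\lfloor s/2\rfloor k}$.

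First I would write $\rho_{k,0}(2^s)=\rho_{k,2^s}(2^s)=\rho^{(1)}_{k,0}(2^s)+\rho^{(2)}_{k,0}(2^s)$, using $2^s\equiv 0\pmod{2^s}$. For $s\geq 3$, Proposition~\ref{rec1}(ii) gives $\rho^{(1)}_{k,0}(2^s)=2^{(s-3)(k-1)}\rho^{(1)}_{k,0}(8)$, while Proposition~\ref{rec2} (with $\lambda=0$, so that $4\mid\lambda$) gives $\rho^{(2)}_{k,0}(2^s)=2^k\rho_{k,0}(2^{s-2})$. Hence $\rho_{k,0}(2^s)=2^{(s-3)(k-1)}\rho^{(1)}_{k,0}(8)+2^k\rho_{k,0}(2^{s-2})$, and I would iterate this identity on the trailing term. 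The $i$-th application (on $\rho_{k,0}(2^{s-2i})$, carrying the accumulated factor $2^{ki}$) is legitimate exactly while $s-2i\geq 3$, i.e. for $i=0,1,\dots,\lfloor (s-1)/2\rfloor-1$, and each step peels off a summand $2^{ki}2^{(s-2i-3)(k-1)}\rho^{(1)}_{k,0}(8)$ and leaves $2^{(i+1)k}\rho_{k,0}(2^{s-2(i+1)})$. Since $s-2i\geq 3$ throughout this range, $2^{s-2i}\equiv 0\pmod 8$, so $\rho^{(1)}_{k,0}(8)=\rho^{(1)}_{k,2^{s-2i}}(8)$, which is the form written in the statement.

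After $\lfloor(s-1)/2\rfloor$ steps the recursion stops with a leftover $2^{\lfloor(s-1)/2\rfloor k}\,\rho_{k,0}(2^{t})$ where $t=s-2\lfloor(s-1)/2\rfloor$ is $1$ or $2$ according to the parity of $s$. Splitting once more, $\rho_{k,0}(2^t)=\rho^{(1)}_{k,0}(2^t)+\rho^{(2)}_{k,0}(2^t)$, and Proposition~\ref{rec2} evaluates $\rho^{(2)}_{k,0}(2)=1$ and $\rho^{(2)}_{k,0}(4)=2^k$. The term $2^{\lfloor(s-1)/2\rfloor k}\rho^{(1)}_{k,0}(2^t)$ is precisely the $\rho^{(1)}_{k,0}(2^{s-2\lfloor(s-1)/2\rfloor})$ term of the formula, and it remains to check $2^{\lfloor(s-1)/2\rfloor k}\rho^{(2)}_{k,0}(2^t)=2^{\lfloor s/2\rfloor k}$ in both parities: if $s$ is odd then $t=1$ and $\lfloor(s-1)/2\rfloor=\lfloor s/2\rfloor$; if $s$ is even then $t=2$ and $\lfloor(s-1)/2\rfloor+1=\lfloor s/2\rfloor$. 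Assembling the pieces yields the claimed identity. The small cases $s=1$ and $s=2$ (for which the sum is empty) I would verify directly from $\rho_{k,0}(2^s)=\rho^{(1)}_{k,0}(2^s)+\rho^{(2)}_{k,0}(2^s)$ together with Proposition~\ref{rec2}.

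There is no real obstacle here — it is a bookkeeping argument — but the one place demanding care is the end of the recursion: obtaining the exponent $\lfloor s/2\rfloor$ (rather than $\lfloor(s-1)/2\rfloor$) correctly by absorbing the base-case value of $\rho^{(2)}_{k,0}(2^t)$, which is exactly where the parity of $s$ intervenes.
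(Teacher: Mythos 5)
Your argument is correct and is essentially the paper's own proof: the paper simply says the result follows by iterating Propositions~\ref{rec1} and~\ref{rec2} as in Theorem~\ref{oddz}, which is exactly the decomposition $\rho=\rho^{(1)}+\rho^{(2)}$ and recursion on $\rho_{k,0}(2^{s-2i})$ that you carry out. Your explicit handling of the termination at exponent $t\in\{1,2\}$ and the parity check producing $2^{\lfloor s/2\rfloor k}$ is just the bookkeeping the paper leaves implicit.
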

\begin{proof}
The proof goes exactly as in Theorem \ref{oddz} using Proposition \ref{rec1} and Proposition \ref{rec2} repeatedly. Note that, in the cases $s=1$ and $s=2$ we consider that if the upper summation limit is $-1$, the sum is empty.
\end{proof}

\section{Fast computation of $\rho_{k,\lambda}(p^s)$}

With the results that we have proved in the previous section, we have a procedure to compute $\rho_{k,\lambda}(p^s)$ which has arithmetic complexity of order $O(s)$. Nevertheless, as we are going to see in this section, it is possible to obtain formulas requiring a constant number of operations.

To do so, given integer numbers $k$, $p$, $s$ and $N$, we define the function
$$\Omega(k,p,s,N):=\sum_{i=0}^{N}   p^{ki+(s-2i-1)(k-1)}.$$
Since it is essentially a geometric series, the following result is straightforward.

\begin{lem}\label{L2}
Let $k$, $p$, $s$ and $N$ be integer numbers. Then,
$$\Omega(k,p,s,N)=\begin{cases}\frac{-1 + p^{1 + N}}{-1 + p}, & \textrm{if $k=1$};\\   p^{-1 + s} (1+N), & \textrm{if $k=2$};\\
  \frac{p^{\left( -1 + k \right) \,\left( -1 + s \right)}
    \left( p^k - p^2\,{\left( p^{2 - k} \right) }^N \right) }
    {-p^2 + p^k}, & \textrm{otherwise}.\end{cases}$$
\end{lem}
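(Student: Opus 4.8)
The statement to prove is Lemma~\ref{L2}, which evaluates
$$\Omega(k,p,s,N)=\sum_{i=0}^{N} p^{ki+(s-2i-1)(k-1)}.$$

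The plan is to factor out the part of the exponent that does not depend on the summation index and recognize what remains as a finite geometric series. First I would rewrite the exponent $ki+(s-2i-1)(k-1)$ by collecting the terms in $i$: it equals $(s-1)(k-1) + i\bigl(k-2(k-1)\bigr) = (s-1)(k-1) + i(2-k)$. Hence
$$\Omega(k,p,s,N)=p^{(s-1)(k-1)}\sum_{i=0}^{N}\bigl(p^{\,2-k}\bigr)^{i}.$$
At this point everything reduces to summing $\sum_{i=0}^N r^i$ with $r=p^{2-k}$, and the three cases in the statement correspond exactly to whether the ratio $r$ equals $1$ or not, together with a minor reindexing when $k=1$.

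Next I would dispatch the cases. When $k=2$ we have $r=p^0=1$, so the sum is simply $N+1$ and the prefactor is $p^{(s-1)(k-1)}=p^{s-1}$, giving $p^{s-1}(1+N)$. When $k\ne 2$ the ratio $r=p^{2-k}\ne 1$ (here we use that $p\ge 2$, so $p^{2-k}=1$ forces $k=2$), and the geometric sum formula yields
$$\sum_{i=0}^N r^i=\frac{1-r^{N+1}}{1-r}=\frac{r^{N+1}-1}{r-1}.$$
Multiplying by $p^{(s-1)(k-1)}$ and clearing the negative powers of $p$ by multiplying numerator and denominator by $p^k$ (so that $r-1=p^{2-k}-1$ becomes $(p^2-p^k)/p^k$ up to sign, and $r^{N+1}=p^{(2-k)(N+1)}$ is handled as written) produces the stated closed form
$$\frac{p^{(k-1)(s-1)}\bigl(p^k-p^2(p^{2-k})^N\bigr)}{p^k-p^2}$$
after a routine simplification; I would just verify the algebra rather than present it in full. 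Finally, for $k=1$ the general third-case formula is still valid, but it simplifies further: the prefactor $p^{(s-1)(k-1)}=p^0=1$ disappears, $r=p$, and $\sum_{i=0}^N p^i=\frac{p^{N+1}-1}{p-1}$, which is precisely the first case; I would note this either as a special instance of the third case or verify it directly.

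There is no real obstacle here: the only points requiring a word of care are (i) justifying that $p^{2-k}\ne 1$ exactly when $k\ne 2$, which follows from $p$ being an integer $\ge 2$ (indeed a prime in all applications), so that the geometric-series formula applies, and (ii) checking that the bookkeeping of powers of $p$ when moving from $\frac{r^{N+1}-1}{r-1}$ to the displayed rational expression is done correctly, in particular the sign handling of $p^k-p^2$ versus $p^2-p^k$. Both are elementary, so the proof is essentially a one-line reduction to a geometric sum followed by case analysis.
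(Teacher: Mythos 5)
Your proof is correct and follows exactly the route the paper intends: the paper itself gives no details, merely observing that $\Omega$ ``is essentially a geometric series'' and calling the result straightforward, which is precisely your reduction to $p^{(s-1)(k-1)}\sum_{i=0}^N (p^{2-k})^i$ followed by the case split on whether the ratio equals $1$. Your algebra checks out in all three cases, so there is nothing to add.
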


The following result will also be useful in the sequel.

\begin{lem}\label{L1}
\
\begin{itemize}
\item[i)] Let $p$ be any prime and let $0\leq\lambda <p$. Then,
$$\rho^{(1)}_{k,\lambda}(p)=\begin{cases} \rho_{k,\lambda}(p)-1, & \textrm{if $\lambda=0$};\\ \rho_{k,\lambda}(p), & \textrm{if $\lambda\neq0$}.\end{cases}$$
\item[ii)] Let $0\leq\lambda <4$. Then,
$$\rho^{(1)}_{k,\lambda}(4)=\begin{cases} \rho_{k,\lambda}(4)-2^k, & \textrm{if $\lambda=0$};\\ \rho_{k,\lambda}(4), & \textrm{if $\lambda\neq0$}.\end{cases}$$
\item[iii)] Let $0\leq\lambda <8$. Then,
$$\rho^{(1)}_{k,\lambda}(8)=\begin{cases} \rho_{k,\lambda}(8)-2^{2k-1}, & \textrm{if $\lambda=0,4$};\\ \rho_{k,\lambda}(8), & \textrm{if $\lambda\neq 0,4$}.\end{cases}$$
\end{itemize}
\end{lem}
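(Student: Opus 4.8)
The plan is to prove each of the three identities by partitioning the solution set $A(k,\lambda,p^t)$ into $A_1$ and $A_2$ exactly as in the definitions of Section~3, so that $\rho_{k,\lambda}(p^t)=\rho^{(1)}_{k,\lambda}(p^t)+\rho^{(2)}_{k,\lambda}(p^t)$, and then to read off $\rho^{(2)}_{k,\lambda}(p^t)$ from Proposition~\ref{rec2}. In each case the claimed ``correction term'' is precisely $\rho^{(2)}_{k,\lambda}(p^t)$, so the whole lemma is essentially a lookup in Proposition~\ref{rec2} for the small values $t=1,2,3$, rearranged into the form $\rho^{(1)}_{k,\lambda}=\rho_{k,\lambda}-\rho^{(2)}_{k,\lambda}$.

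For part (i) I would take $t=1$. By Proposition~\ref{rec2}, $\rho^{(2)}_{k,\lambda}(p)=1$ if $\lambda=0$ (the only all-divisible-by-$p$ tuple in $\mathbb{Z}/p\mathbb{Z}$ is the zero tuple) and $\rho^{(2)}_{k,\lambda}(p)=0$ if $0<\lambda<p$ (the ``otherwise'' branch, since $p^2\nmid\lambda$ forces $A_2=\emptyset$). Subtracting from $\rho_{k,\lambda}(p)$ gives exactly the stated formula. For part (ii) I would take $t=2$: Proposition~\ref{rec2} gives $\rho^{(2)}_{k,0}(p^2)=p^k=2^k$ when $p=2$, and $\rho^{(2)}_{k,\lambda}(4)=0$ for $\lambda=1,2,3$ (again the ``otherwise'' branch, since $1\le\lambda<4$). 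For part (iii) I would take $t=3$, so the relevant branch of Proposition~\ref{rec2} is $s\ge 3$: here $\rho^{(2)}_{k,\lambda}(8)=2^k\rho_{k,\lambda/4}(2)$ when $4\mid\lambda$ and $0$ otherwise. For $\lambda=0$ this is $2^k\rho_{k,0}(2)=2^k\cdot 2^{k-1}=2^{2k-1}$ by Proposition~\ref{PROP2}(i); for $\lambda=4$ it is $2^k\rho_{k,1}(2)=2^k\cdot 2^{k-1}=2^{2k-1}$ by the same; and for $\lambda=1,2,3,5,6,7$ we have $4\nmid\lambda$, so $\rho^{(2)}_{k,\lambda}(8)=0$. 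Assembling these gives the claimed piecewise identity.

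I do not expect a genuine obstacle here; the only point requiring a line of care is that in part (iii) both $\lambda=0$ and $\lambda=4$ yield the same correction $2^{2k-1}$, which happens because $\rho_{k,0}(2)=\rho_{k,1}(2)=2^{k-1}$ — i.e., it relies on the $p=2$ base case rather than on a symmetry of the sum of squares. It is worth noting explicitly that Proposition~\ref{rec2}'s third branch requires $p^2\mid\lambda$ with $\lambda$ taken in the range $0\le\lambda<p^s$; for $p^s=8$ the only such $\lambda$ are $0$ and $4$, which is why no other residue class picks up a correction. The argument is short enough that I would present all three parts together, writing $\rho^{(1)}=\rho-\rho^{(2)}$ once and then tabulating $\rho^{(2)}$ from Propositions~\ref{rec2} and~\ref{PROP2}(i).
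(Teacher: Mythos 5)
Your proposal is correct and follows exactly the paper's own route: the paper's proof is the one-line observation that $\rho^{(1)}_{k,\lambda}(n)=\rho_{k,\lambda}(n)-\rho^{(2)}_{k,\lambda}(n)$ combined with Proposition~\ref{rec2}, which is precisely your argument (you merely spell out the case $\lambda=0,4$ of part (iii) via $\rho_{k,0}(2)=\rho_{k,1}(2)=2^{k-1}$, a detail the paper leaves implicit).
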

\begin{proof}
Just recall that $\rho^{(1)}_{k,\lambda}(n)=\rho_{k,\lambda}(n)-\rho^{(2)}_{k,\lambda}(n)$ and apply Proposition \ref{rec2}.
\end{proof}

\begin{cor}
Let $p^s$ be an odd prime-power and let $0<\lambda<p^s$ be an integer. Put $\lambda=p^r\lambda'$ with $0\leq r<s$ and $p\nmid\lambda'$. Then,
$$\rho_{k,\lambda}(p^s)=\begin{cases}\Omega(k,p,s,\frac{ r-1}{2}) \cdot(\rho_{k,0}^{ }(p)-1), & \textrm{if $r$ is odd};\\ \Omega(k,p,s,\frac{r-2}{2})\cdot(\rho_{k,0}^{ }(p)-1)+p^{k\frac{r }{2}+(s-r-1)(k-1)}\cdot\rho_{k,\lambda'}^{}(p), & \textrm{if $r$ is even }.\end{cases}$$
\end{cor}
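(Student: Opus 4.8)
The plan is to specialize Theorem \ref{oddnz} and then collapse the sum using the geometric-series formula packaged in Lemma \ref{L2}. Recall that Theorem \ref{oddnz} gives
$$\rho_{k,\lambda}(p^s)=\sum_{i=0}^{\lfloor r/2\rfloor}   p^{ki+(s-2i-1)(k-1)}\cdot\rho_{k,\lambda/p^{2i}}^{(1)}(p),$$
so the whole task is to understand the summands. Here $\lambda=p^r\lambda'$ with $p\nmid\lambda'$, hence $\lambda/p^{2i}=p^{r-2i}\lambda'$. For every $i$ with $0\le i<\lfloor r/2\rfloor$ we have $r-2i\ge 2$, so $p\mid \lambda/p^{2i}$; by Lemma \ref{L1}(i) this forces $\rho_{k,\lambda/p^{2i}}^{(1)}(p)=\rho_{k,0}(p)-1$, a quantity independent of $i$. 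The only summand that can behave differently is the last one, $i=\lfloor r/2\rfloor$, and its behaviour depends on the parity of $r$: if $r$ is odd then $r-2\lfloor r/2\rfloor=1$, so $\lambda/p^{2\lfloor r/2\rfloor}=p\lambda'$ is still divisible by $p$ and again contributes $\rho_{k,0}(p)-1$; if $r$ is even then $r-2\lfloor r/2\rfloor=0$, so $\lambda/p^{2\lfloor r/2\rfloor}=\lambda'$ is a unit mod $p$ and contributes $\rho_{k,\lambda'}(p)$ (no $-1$, by Lemma \ref{L1}(i)).

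Next I would separate these two cases. When $r$ is odd, \emph{every} summand $i=0,\dots,\lfloor r/2\rfloor=(r-1)/2$ carries the common factor $\rho_{k,0}(p)-1$, so
$$\rho_{k,\lambda}(p^s)=(\rho_{k,0}(p)-1)\sum_{i=0}^{(r-1)/2} p^{ki+(s-2i-1)(k-1)}=(\rho_{k,0}(p)-1)\cdot\Omega\!\left(k,p,s,\tfrac{r-1}{2}\right),$$
which is exactly the first branch of the claimed formula, by the definition of $\Omega$. When $r$ is even, I would peel off the top term $i=r/2$ and treat the remaining indices $i=0,\dots,(r-2)/2$ as before:
$$\rho_{k,\lambda}(p^s)=(\rho_{k,0}(p)-1)\sum_{i=0}^{(r-2)/2} p^{ki+(s-2i-1)(k-1)}+p^{k(r/2)+(s-r-1)(k-1)}\rho_{k,\lambda'}(p),$$
and recognizing the sum as $\Omega(k,p,s,\frac{r-2}{2})$ (and simplifying the exponent of the leftover term, $k\lfloor r/2\rfloor+(s-2\lfloor r/2\rfloor-1)(k-1)=k\frac{r}{2}+(s-r-1)(k-1)$) yields the second branch.

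There is essentially no hard obstacle here: the corollary is a bookkeeping consequence of Theorem \ref{oddnz} plus Lemma \ref{L1}(i), with Lemma \ref{L2} only needed afterwards (implicitly, via $\Omega$) for the complexity claim. The one point requiring a little care is the edge case $r=1$, where $\lfloor r/2\rfloor=0$ so the sum in Theorem \ref{oddnz} has a single term $i=0$ with $\lambda/p^{0}=\lambda=p\lambda'$ divisible by $p$; this is still covered by the ``$r$ odd'' branch with $\Omega(k,p,s,0)=p^{(s-1)(k-1)}$, matching the known relation $\rho_{k,\lambda}(p^s)=p^{(s-1)(k-1)}\rho^{(1)}_{k,\lambda}(p)=p^{(s-1)(k-1)}(\rho_{k,0}(p)-1)$. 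I would just remark on this rather than belabour it.
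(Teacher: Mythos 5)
Your proof is correct and follows essentially the same route as the paper's: specialize Theorem \ref{oddnz}, use Lemma \ref{L1}(i) to identify each summand as $\rho_{k,0}(p)-1$ except possibly the top term (whose form depends on the parity of $r$), and recognize the resulting geometric sum as $\Omega$. Your remark that Lemma \ref{L2} is only needed implicitly (since the corollary is stated in terms of $\Omega$) and your check of the edge case $r=1$ are fine additional touches, but the substance is identical to the paper's argument.
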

\begin{proof}
Using Lemma \ref{L1} the following hold:
\begin{itemize}
\item If $r$ is odd, then for every $i\leq\lfloor r/2\rfloor$ we have that $\rho_{k,\lambda/p^{2i}}^{(1)}(p)=\rho_{k,0}^{(1)}(p)=\rho_{k,0}^{ }(p)-1$.
\item On the other hand, if $r$ is even then $\rho_{k,\lambda/p^{2i}}^{(1)}(p)=\rho_{k,0}^{(1)}(p)=\rho_{k,0}^{ }(p)-1$ for every $i<r/2$, while $\rho_{k,\lambda/p^{2i}}^{(1)}(p)=\rho_{k,\lambda'}^{}(p)$ for $i=r/2$.
\end{itemize}
Hence, from Theorem \ref{oddnz} it follows that
$$\rho_{k,\lambda}(p^s)=\sum_{i=0}^{  r/2-1}   p^{ki+(s-2i-1)(k-1)}\cdot(\rho_{k,0}^{ }(p)-1)+ p^{kr/2+(s-r-1)(k-1)}\cdot\rho_{k,\lambda'}^{}(p)$$
and Lemma \ref{L2} concludes the proof.
\end{proof}

\begin{cor}
Let $p^s$ be an odd prime-power. Then,
$$\rho^{ }_{k,0}(p^s)=\Omega(k,p,s,\lfloor \frac{ s-1}{2}\rfloor) \cdot(\rho_{k,0}^{ }(p)-1) +p^{k\lfloor s/2\rfloor  }$$
\end{cor}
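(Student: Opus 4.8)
The plan is to derive this corollary from Theorem \ref{oddz} by the same mechanism used in the proof of the previous corollary. Recall that Theorem \ref{oddz} states
$$\rho_{k,0}(p^s)=\sum_{i=0}^{\lfloor (s-1)/2\rfloor} \Big(p^{ki}p^{(s-2i-1)(k-1)}\cdot \rho^{(1)}_{k,p^{s-2i}}(p)\Big) + p^{\lfloor s/2\rfloor k},$$
so the task reduces to two independent simplifications: rewriting each summand's $\rho^{(1)}$-factor as $\rho_{k,0}(p)-1$, and recognizing the remaining sum as a value of $\Omega$.

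First I would handle the $\rho^{(1)}$-factors. For each index $i$ in the range $0\le i\le\lfloor(s-1)/2\rfloor$ we have $s-2i\ge 1$, so $p^{s-2i}$ is a positive multiple of $p$, hence $p^{s-2i}\equiv 0\pmod p$. Therefore $\rho^{(1)}_{k,p^{s-2i}}(p)=\rho^{(1)}_{k,0}(p)$, and Lemma \ref{L1}(i) gives $\rho^{(1)}_{k,0}(p)=\rho_{k,0}(p)-1$. This constant can be pulled out of the sum, yielding
$$\rho_{k,0}(p^s)=(\rho_{k,0}(p)-1)\sum_{i=0}^{\lfloor (s-1)/2\rfloor} p^{ki+(s-2i-1)(k-1)} + p^{\lfloor s/2\rfloor k}.$$

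Second I would identify the sum. By the very definition $\Omega(k,p,s,N)=\sum_{i=0}^{N}p^{ki+(s-2i-1)(k-1)}$, so with $N=\lfloor(s-1)/2\rfloor$ the sum appearing above is exactly $\Omega(k,p,s,\lfloor(s-1)/2\rfloor)$. Substituting gives precisely the claimed formula
$$\rho_{k,0}(p^s)=\Omega\big(k,p,s,\lfloor\tfrac{s-1}{2}\rfloor\big)\cdot(\rho_{k,0}(p)-1)+p^{k\lfloor s/2\rfloor},$$
which completes the proof.

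There is essentially no obstacle here: both steps are bookkeeping. The only point requiring a moment's care is checking the boundary cases $s=1$ and $s=2$ — where $\lfloor(s-1)/2\rfloor=0$ and the sum consists of a single term — to confirm that the convention for $\Omega$ in Lemma \ref{L2} (in particular the $k=1$ and $k=2$ branches) reproduces the explicit small-$s$ values recorded inside the proof of Theorem \ref{oddz}; this is a routine verification. One could alternatively phrase the whole argument as: apply Lemma \ref{L1}(i) termwise in Theorem \ref{oddz}, factor, and invoke the definition of $\Omega$.
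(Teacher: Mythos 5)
Your proof is correct and follows essentially the same route as the paper: observe that $s-2i\geq 1$ for all indices in the sum so that $\rho^{(1)}_{k,p^{s-2i}}(p)=\rho^{(1)}_{k,0}(p)=\rho_{k,0}(p)-1$ by Lemma \ref{L1}, then factor this constant out of the sum in Theorem \ref{oddz} and recognize the remaining geometric sum as $\Omega(k,p,s,\lfloor\frac{s-1}{2}\rfloor)$. No gaps.
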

\begin{proof}
First, observe that $s-2i >0$ for every $i\leq \lfloor \frac{s-1}{2}\rfloor$. Thus, Lemma \ref{L1} implies that $\rho^{(1)}_{k,p^{s-2i}}(p)=\rho_{k,0}(p)-1$. Consequently, it is enough to apply theorem \ref{oddz} to get that
$$\rho_{k,0}(p^s)=  (\rho^{ }_{k,0}(p)-1) \cdot\sum_{i=0}^{\lfloor (s-1)/2\rfloor} \Big(p^{ki}p^{(s-2i-1)(k-1)}\Big) + p^{\lfloor s/2\rfloor k}$$
and the result follows.
\end{proof}

\begin{cor}
Let $2^s$ be a power of two ($s\geq 3$) and let $0<\lambda<2^s$ be an integer. Put $\lambda=2^r\lambda'$ with $0\leq r<s$ and odd $\lambda'$.
Then,
\begin{itemize}
\item[i)] If $r$ is odd and $s-r>1$,
$$\rho^{ }_{k,\lambda}(2^s)=\frac{\Omega(k,2,s,\frac{ r-3}{2})}{2^{2(k-1)} } \cdot(\rho^{ }_{k,0}(8)- 2^{2 k -1})
+2^{k \frac{r-1}{2}+(s-r-2)(k-1) }\rho^{}_{k,\lambda' 2   }(8).$$
\item[ii)] If $r$ is odd and $s-r=1$,
$$\rho^{ }_{k,\lambda}(2^s)=\frac{\Omega(k,2,s,\frac{ r-3}{2})}{2^{2(k-1)} } \cdot(\rho^{ }_{k,0}(8)- 2^{2 k -1})
+2^{k \frac{r-1}{2} }\rho^{ }_{k,2\lambda' }(4).$$
\item[iii)] If $r$ is even and $s-r>2$,
\begin{equation*}\begin{split} \rho_{k,\lambda}(2^s)&= \frac{1}{2^{2(k-1)}} \Omega(k,2,s,\frac{ r-4}{2})\cdot(\rho^{ }_{k,0}(8)-2^{2k-1})+\\
&\ \  +2^{1 + r - s + k\,\left( -2 - \frac{r}{2} + s \right) }  (\rho^{ }_{k,4\lambda'}(8)-2^{2k-1})+
2^{k \frac{r}{2}+(s-r-3)(k-1) }\rho^{ }_{k,\lambda'}(8).
     \end{split} \end{equation*}
\item[iv)] If $r$ is even and $s-r=2$,
\begin{equation*}\begin{split} \rho_{k,\lambda}(2^s)&= \frac{1}{2^{2(k-1)}} \Omega(k,2,s,\frac{ r-4}{2})\cdot(\rho^{ }_{k,0}(8)-2^{2k-1})+\\
&\ \  +2^{1 + r - s + k\,\left( -2 - \frac{r}{2} + s \right) }  (\rho^{ }_{k,4\lambda'}(8)-2^{2k-1})+
2^{k \frac{r}{2} }\rho^{ }_{k,\lambda'}(4).
     \end{split} \end{equation*}		
\item[v)] If $r$ is even and $s-r=1$,
\begin{equation*}\begin{split} \rho_{k,\lambda}(2^s)&= \frac{1}{2^{2(k-1)}} \Omega(k,2,s,\frac{ r-4}{2})\cdot(\rho^{ }_{k,0}(8)-2^{2k-1})+\\
&\ \  +2^{1 + r - s + k\,\left( -2 - \frac{r}{2} + s \right) }  (\rho^{ }_{k,4\lambda'}(8)-2^{2k-1})+
2^{k \frac{r}{2} }\rho^{ }_{k,\lambda'}(2).
     \end{split} \end{equation*}
\end{itemize}
\end{cor}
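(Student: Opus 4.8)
The plan is to start from the recursive formula of Theorem~\ref{2nz}, convert every $\rho^{(1)}$ appearing there into an ordinary $\rho$ by means of Lemma~\ref{L1}, Proposition~\ref{rec1}(ii) and the Remark preceding Proposition~\ref{rec1}, and finally recognise the resulting partial geometric sum as a value of $\Omega$. Concretely, since $s\geq 3$ and $0<\lambda=2^{r}\lambda'<2^{s}$ with $\lambda'$ odd, Theorem~\ref{2nz} gives
$$\rho_{k,\lambda}(2^{s})=\sum_{i=0}^{\lfloor r/2\rfloor-1}2^{ki+(s-2i-3)(k-1)}\,\rho^{(1)}_{k,\lambda/2^{2i}}(8)\;+\;2^{k\lfloor r/2\rfloor}\,\rho^{(1)}_{k,\lambda/2^{2\lfloor r/2\rfloor}}\!\left(2^{s-2\lfloor r/2\rfloor}\right),$$
with the convention (already used there) that a sum whose upper limit is below its lower limit is empty.

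Next I would describe the terms of the finite sum. For $0\leq i\leq\lfloor r/2\rfloor-1$ one has $\lambda/2^{2i}=2^{r-2i}\lambda'$ with $r-2i\geq 2$. If $r-2i\geq 3$ then $\lambda/2^{2i}\equiv 0\pmod 8$, so Lemma~\ref{L1}(iii) turns that summand's $\rho^{(1)}_{k,\lambda/2^{2i}}(8)$ into $\rho_{k,0}(8)-2^{2k-1}$; if $r-2i=2$ --- which forces $r$ even and $i=r/2-1$ --- then $\lambda/2^{2i}=4\lambda'\equiv 4\pmod 8$ and Lemma~\ref{L1}(iii) turns it into $\rho_{k,4\lambda'}(8)-2^{2k-1}$. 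Thus for odd $r$ every summand carries the factor $\rho_{k,0}(8)-2^{2k-1}$ and $i$ ranges over $0,\dots,(r-3)/2$; for even $r$ the summands with $i\leq r/2-2$ carry that same factor (and $i$ there ranges over $0,\dots,(r-4)/2$) while the lone summand $i=r/2-1$ carries instead $\rho_{k,4\lambda'}(8)-2^{2k-1}$ with coefficient $2^{k(r/2-1)+(s-r-1)(k-1)}=2^{\,1+r-s+k(s-2-r/2)}$.

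Since $(s-2i-3)(k-1)=(s-2i-1)(k-1)-2(k-1)$, the definition of $\Omega$ yields $\sum_{i=0}^{N}2^{ki+(s-2i-3)(k-1)}=2^{-2(k-1)}\,\Omega(k,2,s,N)$; applying this with $N=(r-3)/2$ for odd $r$ and $N=(r-4)/2$ for even $r$ converts the finite sum of the previous step into the first term(s) of the stated formulas. It then remains to treat the tail term $2^{k\lfloor r/2\rfloor}\rho^{(1)}_{k,\lambda/2^{2\lfloor r/2\rfloor}}(2^{s-2\lfloor r/2\rfloor})$. Here the argument is $2\lambda'$ and the modulus is $2^{s-r+1}$ when $r$ is odd, and the argument is $\lambda'$ (odd) and the modulus is $2^{s-r}$ when $r$ is even; in all these cases $\rho^{(1)}$ coincides with $\rho$, by the Remark when the argument is odd and by Lemma~\ref{L1}(ii)--(iii) when the argument is $2\lambda'\equiv 2\pmod 4$. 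If the modulus exponent is $\geq 3$ I would use Proposition~\ref{rec1}(ii) to descend to $8$, producing a factor $2^{(\text{exponent}-3)(k-1)}$; if it equals $2$ or $1$ the tail term is already $2^{k\lfloor r/2\rfloor}\rho_{k,\cdot}(4)$ or $2^{k\lfloor r/2\rfloor}\rho_{k,\cdot}(2)$. This is exactly why the statement splits into five cases: the parity of $r$ fixes the argument of the tail ($2\lambda'$ or $\lambda'$), and the value of $s-r\in\{1,2,\geq 3\}$ decides whether the descent uses $8$, $4$ or $2$.

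Collecting the pieces from the last two steps yields cases (i)--(v). The only real difficulty is bookkeeping: one must keep careful track, summand by summand and for the tail, of the residue of the relevant argument modulo $8$ (or $4$, or $2$), so as to invoke the correct branch of Lemma~\ref{L1}, and then check that all exponents agree after expansion --- in particular the opaque exponent $1+r-s+k(s-2-r/2)$ and the shift that turns $s-2i-3$ into $s-2i-1$ inside $\Omega$. The degenerate small values of $r$ (e.g.\ $r=1$, or $r$ even with $r\leq 2$) cause no trouble: they are absorbed by the empty-sum convention inherited from Theorem~\ref{2nz}.
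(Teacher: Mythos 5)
Your proposal is correct and follows essentially the same route as the paper: expand Theorem \ref{2nz}, use Lemma \ref{L1} to replace each $\rho^{(1)}_{k,\lambda/2^{2i}}(8)$ according to the residue of $\lambda/2^{2i}$ modulo $8$ (isolating the exceptional summand $i=r/2-1$ when $r$ is even), absorb the geometric sum into $2^{-2(k-1)}\Omega(k,2,s,N)$, and reduce the tail term via Proposition \ref{rec1}(ii) and Lemma \ref{L1} according to whether $s-r$ is $\geq 3$, $2$ or $1$. The exponent computations you flag (e.g.\ $k(r/2-1)+(s-r-1)(k-1)=1+r-s+k(s-2-r/2)$) check out, so no gap beyond the bookkeeping you already acknowledge.
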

\begin{proof}
\begin{itemize}
\item[i)] If $r$ is odd and $s-r>1$, then $r-2 i\geq3$ for every $i\leq \lfloor \frac{r}{2}-1\rfloor$. Consequently,
$$\rho^{(1)}_{k,\lambda/2^{2i}}(8)=\rho^{(1)}_{k,0}(8)=\rho^{ }_{k,0}(8)-2^{2 k-1}$$
due to Lemma \ref{L1} and
$$\sum_{i=0}^{\lfloor\frac{r}{2}\rfloor-1}\Big(2^{ki}2^{(s-2i-3)(k-1)}\cdot\rho^{(1)}_{k,\lambda/2^{2i}}(8)\Big)=\frac{1}{2^{2(k-1)}} \Omega(k,2,s,r,\frac{ r-3}{2})\cdot(\rho^{ }_{k,0}(8)-2^{2k-1}).$$
Finally, since
$$2^{k\lfloor\frac{r}{2}\rfloor}\rho^{(1)}_{k,\lambda/2^{2\lfloor\frac{r}{2}\rfloor}}(2^{s-2\lfloor\frac{r}{2}\rfloor})=2^{k\lfloor\frac{r}{2}\rfloor}\rho^{(1)}_{k,2\lambda' }(2^{s-r+1})=2^{k \frac{r-1}{2}+(s-r-2)(k-1) }\rho^{}_{k,2\lambda'}(8)$$
the result follows in this case.
\item[ii)] If $r$ is odd and $s-r=1$, we proceed like in the previous case but now we have that
$$2^{k\lfloor\frac{r}{2}\rfloor}\rho^{(1)}_{k,2\lambda' }(2^{s-r+1})=2^{k\lfloor\frac{r}{2}\rfloor}\rho^{(1)}_{k,2\lambda' }(4)=2^{k\lfloor\frac{r}{2}\rfloor}\rho^{ }_{k,2\lambda' }(4).$$
\item[iii)] If $r$ is even and $s-r>2$, then $r-2 i\geq3$ for every $i<\lfloor \frac{r }{2}-1\rfloor$, while $r-2i=2$ for $i=\lfloor \frac{r}{2}-1\rfloor$.
Thus,
$$ \sum_{i=0}^{\lfloor
\frac{r}{2}\rfloor-1}\Big(2^{ki}2^{(s-2i-3)(k-1)}\cdot\rho^{(1)}_{k,\lambda/2^{2i}}(8)
\Big)=$$
$$= \sum_{i=0}^{
\frac{r-4}{2} }\Big(2^{ki}2^{(s-2i-3)(k-1)}\cdot\rho^{(1)}_{k,0}(8)
\Big)   +2^{1 + r - s + k\,\left( -2 - \frac{r}{2} + s \right) }  (\rho^{1 }_{k,4\lambda'}(8))
=$$
$$= \sum_{i=0}^{
\frac{r-4}{2} }\Big(2^{ki}2^{(s-2i-3)(k-1)}\cdot\rho^{ }_{k,0}(8)
\Big)   +2^{1 + r - s + k\,\left( -2 - \frac{r}{2} + s \right) }  (\rho^{ }_{k,4\lambda'}(8)-2^{2k-1}).$$
\item[iv)] and v) If $r$ is even and $1\leq s-r\leq 2$, we proceed like in the previous case but now we have that
$$ 2^{k\lfloor\frac{r}{2}\rfloor}\rho^{(1)}_{k,\lambda/2^{2\lfloor\frac{r}{2}\rfloor}}(2^{s-2\lfloor\frac{r}{2}\rfloor})=2^{k \frac{r}{2} }\rho^{(1)}_{k,\lambda'}(2^{s-r})=2^{k \frac{r}{2} }\rho^{}_{k,\lambda'}(2^{s-r}).$$
\end{itemize}
\end{proof}

\begin{cor}
Let $2^s$ be a power of two ($s\geq 3$). Then,
$$\rho_{k,0}(2^s)=\begin{cases}\frac{1}{2^{2(k-1)}}\Omega(k,2,s,\frac{ s-3}{2}) \Big(\rho_{k,0}(8)-2^{2k-1}\Big) + 2^{ \frac{s-1}{2}k}\cdot 2^{k-1}+2^{\frac{s-1}{2}}, & \textrm{if $r$ is odd };\\ \frac{1}{2^{2(k-1)}}\Omega(k,2,s,\frac{ s-4}{2}) \Big(\rho_{k,0}(8)-2^{2k-1}\Big) + 2^{ \frac{s-2}{2}k}\cdot(\rho_{k,0}(4)-2^k)+2^{ \frac{s}{2}  k}, & \textrm{if $r$ is even }.\end{cases}$$
\end{cor}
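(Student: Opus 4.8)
The plan is to specialise Theorem~\ref{2z} in exactly the way the preceding corollaries specialise Theorems~\ref{oddnz}, \ref{oddz} and \ref{2nz}; the only new ingredients are Lemma~\ref{L1}(iii), Lemma~\ref{L2}, and the base values in Proposition~\ref{PROP2}. Throughout, the two cases correspond to the parity of $s$ (the equation having $\lambda=0$).

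First I would check that Lemma~\ref{L1}(iii) can be applied uniformly to every summand of Theorem~\ref{2z}. For $0\le i\le\lfloor\tfrac{s-1}{2}\rfloor-1$ one has $s-2i\ge s-2\big(\lfloor\tfrac{s-1}{2}\rfloor-1\big)\ge 3$, since $s-2\lfloor\tfrac{s-1}{2}\rfloor\ge 1$ for both parities of $s$. Hence $2^{\,s-2i}\equiv 0\pmod 8$ and Lemma~\ref{L1}(iii) gives $\rho^{(1)}_{k,2^{\,s-2i}}(8)=\rho^{(1)}_{k,0}(8)=\rho_{k,0}(8)-2^{2k-1}$, so the sum in Theorem~\ref{2z} becomes
$$\big(\rho_{k,0}(8)-2^{2k-1}\big)\sum_{i=0}^{\lfloor(s-1)/2\rfloor-1}2^{\,ki+(s-2i-3)(k-1)}.$$
Writing $(s-2i-3)(k-1)=(s-2i-1)(k-1)-2(k-1)$ identifies the inner sum as $2^{-2(k-1)}\,\Omega\big(k,2,s,\lfloor\tfrac{s-1}{2}\rfloor-1\big)$, and since $\lfloor\tfrac{s-1}{2}\rfloor-1$ equals $\tfrac{s-3}{2}$ when $s$ is odd and $\tfrac{s-4}{2}$ when $s$ is even, this reproduces the $\Omega$-term of each case of the statement.

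It then remains to evaluate the tail $2^{\lfloor(s-1)/2\rfloor k}\,\rho^{(1)}_{k,0}\!\big(2^{\,s-2\lfloor(s-1)/2\rfloor}\big)+2^{\lfloor s/2\rfloor k}$. When $s$ is odd, $s-2\lfloor\tfrac{s-1}{2}\rfloor=1$, so Lemma~\ref{L1}(i) together with $\rho_{k,0}(2)=2^{k-1}$ from Proposition~\ref{PROP2}(i) gives $\rho^{(1)}_{k,0}(2)=2^{k-1}-1$; substituting and collecting with $2^{\lfloor s/2\rfloor k}=2^{\frac{s-1}{2}k}$ yields the first line. When $s$ is even, $s-2\lfloor\tfrac{s-1}{2}\rfloor=2$, so Lemma~\ref{L1}(ii) gives $\rho^{(1)}_{k,0}(4)=\rho_{k,0}(4)-2^{k}$, and together with $2^{\lfloor s/2\rfloor k}=2^{\frac{s}{2}k}$ this gives the second line. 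The computation is entirely routine; the only point I would be careful about is the floor-function bookkeeping in the two parity cases and, in particular, double-checking that $s-2i\ge 3$ holds across the whole summation range so that Lemma~\ref{L1}(iii) applies to every term at once.
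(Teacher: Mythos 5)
Your route is exactly the paper's: apply Lemma \ref{L1}(iii) to every summand of Theorem \ref{2z}, recognise the resulting geometric sum as $2^{-2(k-1)}\Omega(k,2,s,\lfloor(s-1)/2\rfloor-1)$, and evaluate the tail according to the parity of $s$ (the ``$r$'' in the statement can only mean $s$). Your verification that $s-2i\ge 3$ on the whole summation range, your identification of the $\Omega$-term in both parity cases, and your even-$s$ tail are all correct and agree with the displayed formula.

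The problem is your final assertion in the odd case: that substituting $\rho^{(1)}_{k,0}(2)=2^{k-1}-1$ and collecting with $2^{\lfloor s/2\rfloor k}=2^{\frac{s-1}{2}k}$ ``yields the first line.'' It does not yield the line as printed. Your (correct) computation gives a tail of $2^{\frac{s-1}{2}k}\bigl(2^{k-1}-1\bigr)+2^{\frac{s-1}{2}k}=2^{\frac{s-1}{2}k}\cdot 2^{k-1}$ with no further additive term, whereas the stated formula carries an extra $+\,2^{\frac{s-1}{2}}$. That extra term is spurious: for $k=1$, $s=3$ the printed formula evaluates to $4$ while $\rho_{1,0}(8)=2$, which is what your version gives. (The paper's own proof writes $\rho^{(1)}_{k,0}(2)=2^{k-1}$, contradicting its own Lemma \ref{L1}(i), and even that value would produce $+\,2^{\frac{s-1}{2}k}$ rather than the printed $+\,2^{\frac{s-1}{2}}$.) So your derivation is sound and in fact corrects the corollary, but as a proof of the literal statement the last step fails, and you should have flagged the discrepancy instead of asserting agreement.
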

\begin{proof}
For every $i\leq \lfloor \frac{s-1}{2}\rfloor-1$ we have that
$$\rho^{(1)}_{k,2^{s-2i}}(8)=\rho^{(1)}_{k,0}(8)=\rho^{ }_{k,0}(8)-2^{2 k-1}.$$
Now, if $r$ is odd
$$\rho^{(1)}_{k,0}(2^{s-2\lfloor\frac{s-1}{2}\rfloor})=\rho^{(1)}_{k,0}(2)=2^{k-1}.$$
While, if $r$ is even
$$\rho^{(1)}_{k,0}(2^{s-2\lfloor\frac{s-1}{2}\rfloor}=\rho^{(1)}_{k,0}(4)=\rho^{ }_{k,0}(4)-2^{k}.$$
In any case, it suffices to apply Theorem \ref{2z}.
\end{proof}

\section{Computational complexity of the computation of $\rho_{k,\lambda}(p^s)$}

The use of formulas for $\rho_{k,\lambda}(p^s)$ like (\ref{gaus}) and (\ref{dos}), based in the use of Gauss sums is ineffective, even for moderate small values of the parameters. For instance, the computation of $\rho_{10,5^{100000}}(5^{1000000})$ using (\ref{dos}) requires more than $10^{18}$ arithmetic operations. With the formulas that we have presented in this paper, the number of required arithmetic operations is of constant order and the aforementioned value can be computed in a domestic PC almost instantly. However, the mentioned arithmetic operations involve powers of integers as well as the computation of Legendre symbols in order to obtain the value of $\rho_{k,\lambda  }(p )$ via Proposition \ref{LEB}. These operations, when considered bit-wise, have a computational cost that increases with the size of the inputs. This becomes apparent when the involved parameters are very big.

If we have a look at the formulas presented in the previous section, those operations whose computational cost dominates over the others are the computation of the power $p^{k s}$ and the computation of the Legendre symbol. Their computational bit-level complexity is, respectively, $O(M(\log(p)) \log(ks))$ and $O(M(\log(p)) \log\log(p))$, where $M(n)$ represents the computational complexity of the chosen multiplication algorithm (see \cite{ja}). This gives an idea of which is the influence of each parameter over the overall computational cost of our procedure, as well as of its limitations. In fact, this reveals that the influence of the parameters $k$ and $s$ are similar (logarithmic order complexity) and somewhat lower to that of the prime $p$ when considering, for instance, the Schönhage-Strassen  multiplication algorithm whose computational complexity for the product of two numbers of size $n$ is $M(n)=O(n \log(n) \log\log(n))$ (see \cite{mul}) or the Fürer's algorithm \cite{mul2}, which runs in time $O(n \log (n) 2^ {O(\log^{*}( n)})$.


\begin{thebibliography}{30}

\bibitem{ir} K. Ireland and M. Rosen, A Classical Introduction to Modern Number Theory, 2nd ed., Graduate Texts in Mathematics 84, Springer, 1990.

\bibitem{jor2}
Leonard~Eugene Dickson.
\newblock {\em History of the theory of numbers. {V}ol. {I}: {D}ivisibility and
  primality.}
\newblock Chelsea Publishing Co., New York, 1966.

\bibitem{cero} B. C. Berndt, R. J. Evans, and K. S. Williams, {\em Gauss and Jacobi Sums}, Canadian Mathematical Society Series of Monographs and Advanced 

   \bibitem{concat} Calderón, Catalina; Grau, José María; Oller-Marcén Antonio M. and Tóth, László (2015). Counting invertible sums of squares modulo n and a new generalization of euler's totient function. {\em Publicationes Mathematicae Debrecen} 87 (1-2), pp. 133-145 .

\bibitem{toth} Tóth, L., Counting Solutions of Quadratic Congruences in Several Variables Revisited, Journal of Integer Sequences, 17 (2014), Article 14.11.6.
\bibitem{Cal}  C. Calder\'on, M.J. de Velasco, On divisors of a quadratic form. {\em Bol Soc Bras. Mat.} , Vol. 31, No 1, 81-91.


\bibitem{Koro}  N.M. Korobov, \em Las sumas trigonom\'etricas y sus aplicaciones. \rm Servicio Editorial de la Universidad del Pa\'\i s Vasco. 1993.

\bibitem{ja} Richard P. Brent and Paul Zimmermann, An $O(M(n)\log n)$ algorithm for the Jacobi symbol,
{\em Proc. ANTS-IX}, LNCS 6197 (2010), 83-95.

\bibitem{mul} A. Schönhage and V. Strassen,  Schnelle Multiplikation großer Zahlen , Computing 7 (1971), pp. 281-292.
\bibitem{mul2}  Fürer, M. (2007). Faster Integer Multiplication. In {\em Proceedings of the thirty-ninth annual ACM symposium on Theory of computing}, June 11-13, 2007, San Diego, California, USA
\end{thebibliography}
\end{document}